\providecommand{\U}[1]{\protect\rule{.1in}{.1in}}
\newtheorem{theorem}{Theorem} [section]
\newtheorem{claim}[theorem]{Claim}
\newtheorem{conclusion}[theorem]{Conclusion}
\newtheorem{corollary}[theorem]{Corollary}
\newtheorem{definition}[theorem]{Definition}
\newtheorem{proposition}[theorem]{Proposition}
\newtheorem{remark}[theorem]{Remark}
\newenvironment{proof}[1][Proof]{\noindent\textbf{#1.} }{\ \rule{0.5em}{0.5em}}
\begin{document}
\author{Robert Shwartz\\Department of Mathematics\\Ariel University, Israel\\robertsh@ariel.ac.il
\and Hadas Yadayi\\Department of Mathematics (graduated)\\Ariel University, Israel\\hadas111096@gmail.com}
\date{}
\title{Sequences over finite fields defined by OGS and BN-pair decompositions of $PSL_{2}(q)$ recursively}
\maketitle
\begin{abstract}
\noindent Factorization of groups into Zappa-Sz\'{e}p product, or more generally into $k$-fold Zappa-Sz\'{e}p product of its subgroups, is an interesting problem, since it eases the multiplication of two elements in a group, and has recently been applied for public-key cryptography as well. We give a generalization of the $k$-fold Zappa-Sz\'{e}p product of cyclic groups, which we call $OGS$ decomposition. It is easy to see that existence of an $OGS$ decomposition for all the composition factors of a non-abelian group $G$ implies the existence of an $OGS$ for $G$ itself. Since the composition factors of a soluble group are cyclic groups, it obviously has an $OGS$ decomposition. Therefore, the question of the existence of an $OGS$ decomposition is interesting for non-soluble groups.  The Jordan-H\"{o}lder Theorem motivates us to consider an existence of an $OGS$ decomposition for the finite simple groups. In 1993, Holt and Rowley showed that $PSL_{2}(q)$ and $PSL_{3}(q)$ can be expressed as a product of cyclic groups.  In this paper, we consider an $OGS$ decomposition of $PSL_{2}(q)$ from point of view different than that of Holt and Rowley. We look at its connection to the $BN-pair$ decomposition of the group.  This connection leads to sequences over $\mathbb{F}_{q}$, which can be defined recursively, with very interesting properties, and  which are closely connected to the Dickson and to the Chebyshev polynomials. Since every finite simple group of Lie-type has $BN-pair$ decomposition, the ideas of the paper might be generalized to further simple groups of Lie-type.

\textbf{Keywords:}  finite simple groups of Lie-type, OGS decomposition, BN-pair decomposition, recursive sequences over finite fields, Dickson polynomials, Chebyshev polynomials.

\textbf{MSC 2010 classification:} 20E34, 20E42, 20D06, 20D40, 20D05, 20F05.

\end{abstract}

\section{Introduction}

The fundamental theorem of finitely generated abelian groups states the following:
Let $A$ be a finitely generated abelian group, then there exists generators $a_{1}, a_{2}, \ldots a_{n}$, such that every element $a$ in $A$ has a unique presentation of a form:
$$g=a_{1}^{i_{1}}\cdot a_{2}^{i_{2}}\cdots a_{n}^{i_{n}},$$
where, $i_{1}, i_{2}, \ldots, i_{n}$ are $n$ integers such that for  $1\leq k\leq n$, $0\leq i_{k}<|g_{k}|$, where $a_{k}$ has a finite order of $|a_{k}|$ in $A$, and $i_{k}\in \mathbb{Z}$, where $a_{k}$ has infinite order in $A$.
Where, the meaning of the theorem is that every abelian group $A$ is direct sum of finitely many cyclic subgroup $A_{i}$ (where $1\leq i\leq k$), for some $k\in \mathbb{N}$.

The mentioned  property of abelian groups yields a natural question whether a finite group (not necessarily abelian) can be factorized into it's subgroups.
\begin{definition}\cite{miller, zappa, szep}\label{zszep}
 Let $G$ be a group, and let $H$ and $K$ its subgroups, such that the following holds:
 \begin{itemize}
 \item $|G|=|H|\cdot |K|$;
 \item $H\cap K=\{1\}$;
 \item $G=HK$.
 \end{itemize}
Then obviously, every element $g\in G$ has a unique presentation of the form $g=hk$, such that $h\in H$ and $k\in K$, and $G$ is called  the Zappa-Sz\'{e}p product of its subgroups $H$ and $K$, and is denoted $G=H\bowtie K$.
\end{definition}
The Zappa-Sz\'{e}p product of $H$ and $K$ involves appropriate actions of $H$ on $K$ and of $K$ on $H$, which allow easy multiplication of elements in pair form. Therefore, a lot of work has been carried out about factorization of a group into a Zappa-Sz\'{e}p product of its subgroups, and the literature contains several hundreds of papers in the last 30 years about factorization of simple groups by Walls, Praeger, Jones, Heng Li, Giudici, Wiegold, Williamson, Baumeister, Liebeck, Saxl, and many others. In 1984, Arad and Fishman \cite{AF} completely classified which simple groups $G$ can be written as the Zappa-Sz\'{e}p product $G=A\bowtie B$.
The factorization of a group $G$ into the Zappa-Sz\'{e}p product can be generalized to a $k$-fold Zappa-Sz\'{e}p product for subgroups $H_{1}, H_{2}, \ldots, H_{k}$, where the product of the $k$-subgroups $H_{1}H_{2}\cdots H_{k}$ represents uniquely every element in $G$. For example, $\mathbb{Z}_{2}\bigoplus \mathbb{Z}_{2}\bigoplus \mathbb{Z}_{2}$ can be considered as a $3$-fold Zappa-Sz\'{e}p product for subgroups $H_{1}$, $H_{2}$, and $H_{3}$, where $H_{i}\simeq \mathbb{Z}_{2}$ for $1\leq i\leq 3$.  In 1928, Hall \cite{Hall} proved that every soluble group is a product of its Sylow subgroups. By \cite{HR}, the simple groups $PSL_{2}(q)$, $PSL_{3}(q)$, and some alternating groups can be expressed as the product of their Sylow subgroup as well.  In 2003, Vasco, R\"{o}tteler, and Steinweidt \cite{VRS} proved that the five sporadic Mathieu groups ($M_{11}$, $M_{12}$, $M_{22}$, $M_{23}$, and $M_{24}$) are products of their Sylow subgroups as well, and they motivate the factorizations to public-key cryptography.

There is a special interest in the $k$-fold Zappa-Sz\'{e}p product for cyclic subgroups, which generalize the fundamental theorem of finitely generated abelian groups for the non-abelian case. The problem of factoring a group $G$ into a Zappa-Sz\'{e}p product of two cyclic subgroups was posed by Ore in 1937 \cite{Ore}. It was considered in a series of papers by Jesse Douglas in the early 1950s \cite{Douglas1, Douglas2, Douglas3, Douglas4}. However the problem is still open.
In \cite{SAR}, an interesting  generalization of it, which is called $OGS$, was proposed.  We start by recalling the definition of the $OGS$ decomposition.
\begin{definition}\label{ogs}
Let $G$ be a non-abelian group. The ordered sequence of $n$ elements $\langle g_{1}, g_{2}, \ldots, g_{n}\rangle$ is called an $Ordered ~~ Generating ~~ System$ of the group $G$ or by shortened notation, $OGS(G)$, if every element $g\in G$ has a unique presentation in the form
$$g=g_{1}^{i_{1}}\cdot g_{2}^{i_{2}}\cdots g_{n}^{i_{n}},$$
where, $i_{1}, i_{2}, \ldots, i_{n}$ are $n$ integers such that for  $1\leq k\leq n$, $0\leq i_{k}<r_{k}$, where  $r_{k} | |g_{k}|$  in case the order of $g_{k}$ is finite in $G$, or   $i_{k}\in \mathbb{Z}$, in case  $g_{k}$ has infinite order in $G$.
\end{definition}

For example, the quaternion group $Q_{8}$ has an $OGS$ decomposition, although $Q_{8}$ can not be expressed as a Zappa-Sz\'{e}p product of its cyclic subgroups.  Let $$Q_8=\langle a, b | a^4=1, a^2=b^2, bab^{-1}=a^{-1}\rangle$$
Then, every element of $Q_{8}$ has a unique presentation in a form $a^{i}\cdot b^{j}$, where $0\leq i<4$, and $0\leq j<2$ (although $|b|=4$). Considering  the defining relation $ba=a^{-1}b$, we can easily multiply two elements of the form $$x_{1}=a^{i_1}\cdot b^{j_1},  ~~x_{2}=a^{i_2}\cdot b^{j_2}.$$
\begin{itemize}
\item If $j_1=0$, then $x_{1}\cdot x_{2}=a^{i_1+i_2}\cdot b^{j_1+j_2}$;
\item If $j_1=1, ~~j_2=0$, then $x_{1}\cdot x_{2}=a^{i_1-i_2}\cdot b^{j_1+j_2}$;
\item If $j_1=1, ~~j_2=1$, then $x_{1}\cdot x_{2}=a^{i_1-i_2+2}$.
\end{itemize}

Another example is the alternating group $Alt_{6}$. By \cite{miller} the following holds:
 \begin{itemize}
\item  $Alt_{2n+1}$ is isomorphic to a Zappa-Sz\'{e}p product of $Alt_{2n}$ and $\mathbb{Z}_{2n+1}$;
\item $Alt_{4n}$ is isomorphic to a Zappa-Sz\'{e}p product of $Alt_{4n-1}$ and $Dih_{2n}$ (the dihedral group $\mathbb{Z}_{2n}\rtimes \mathbb{Z}_{2}$);
\item $Alt_{6}$ can not be expressed as a Zappa-Sz\'{e}p product of its subgroups.
\end{itemize}
Although by \cite{salt} there exists an $OGS$ for $Alt_{n}$ for every $n$ (including $n=6$), with very interesting multiplication laws.

The last examples (quaternion group $Q_{8}$, alternating group $Alt_{6}$) demonstrate that there are non-abelian groups with an $OGS$ decomposition which does not coincide with any  $k$-fold Zappa-Sz\'{e}p product. Therefore, the requirement  $r_{k} | |g_{k}|$ (and not necessarily $r_{k}=|g_{k}|$)  is essential in Definition \ref{ogs}, which generalizes the definition of  $k$-fold Zappa-Sz\'{e}p product of cyclic groups.

Similarly to  Zappa-Sz\'{e}p products, the $OGS$ decomposition of a group $G$ can ease the multiplication of two elements in $G$. Moreover, the definition of $OGS$ decomposition for the classical Coxeter groups \cite{BB} has close connections to the Coxeter length function \cite{Sh} and more generally, in the case of the complex reflection groups \cite{STo}, the $OGS$ decomposition is closely connected to the dimensions of components of an algebra such that the complex reflection group acts on it (for details, see \cite{SAR}). Therefore, it is interesting to see whether a generic non-abelian group has an $OGS$ decomposition.

The question of the existence of an $OGS$ decomposition for a finite group $G$ can be reduced to the existence of an $OGS$ decomposition of all the composition factors of it. Therefore, now we mention the composition factors of a finite group, and its connection to an $OGS$ decomposition of it.

Let $G$ be a finite group. Consider the series
$$G=G_{0}\trianglerighteq G_{1}\trianglerighteq G_{2}\trianglerighteq\cdots \trianglerighteq G_{n}=1,$$
where, $G_{i}$ is a maximal normal subgroup of $G_{i-1}$ for every $1\leq i\leq n$.
Then \\ $K_{i}=G_{i-1}/G_{i}$ is a finite simple group for $1\leq i\leq n$, and $$|G|=|K_{1}|\cdot |K_{2}|\cdots |K_{n}|.$$
The Jordan-H\"{o}lder theorem states that the multi-set $\{K_{1}, K_{2}, \ldots K_{n}\}$ is an invariant of the group $G$, which does not depend on the choice of the maximal subgroup $G_{i}$ \cite{J1, J2, H, S}. Thus, $K_{1}, K_{2}, \ldots, K_{n}$ are called the composition factors of the group $G$.
If $K_{i}$ has an $OGS$ for every $1\leq i\leq n$, then by taking the elements of $G$ which are the corresponding representatives to the cosets in
the sub-quotients $K_{i}$,  we get an $OGS$ for $G$.
A group $G$ is called soluble if all composition factors of the group are cyclic. Thus, obviously, every finite soluble group $G$ has an $OGS$, which are the coset representatives  corresponding to the generators of the cyclic sub-quotients $K_{1}, K_{2}, \ldots, K_{n}$.
Therefore, it motivates us to check whether a non-soluble group has an $OGS$ presentation. Since all the composition factors of a finite group $G$ are simple groups, we can reduce the question of the existence of an $OGS$ decomposition to simple groups only. The classification of finite simple groups was announced in 1981 by Gorenstein  \cite{G}, and was completed in 2004 by Aschbacher and Smith \cite{AS}. The classification states that there are three categories of non-abelian simple groups:
\begin{itemize}
\item The alternating group $Alt_{n}$ for $n\geq 5$ (The even permutations);
\item The simple groups of Lie-type;
\item 26 sporadic finite simple groups.
\end{itemize}
Since the most significant category of simple groups is the simple groups of Lie-type, it is interesting enough to try to answer the question of whether a finite simple  group of Lie-type has an $OGS$ presentation.

\subsection{$BN-pair$ decomposition}

The simple groups of Lie-type can be considered by  matrix presentation. Moreover, J. Tits introduced  the $BN-pair$ decomposition \cite{T1, T2}  for every simple group of Lie-type, i.e., there exist specific subgroups $B$ and $N$, such that the simple group of Lie-type is the product $BNB$, with the properties which we now describe. We use the notation of \cite{St}.

\begin{definition}\label{bnpair}
Let $G$ be a group, then the subgroups $B$ and $N$ of $G$ form a $BN-pair$ if the following axioms are satisfied:
\begin{enumerate}
\item $G=\langle B, N\rangle$;
\item $H=B\cap N$ is normal in $N$;
\item $W=N/H$ is generated by a set $S$ of involutions, and $(W,S)$ is a Coxeter system;
\item If $\dot{w}\in N$ maps to $w\in W$ and $\dot{s}\in N$ maps to $s\in S$  (under $N\rightarrow W$) , then for every $\dot{v}$ and $\dot{s}$:
 \begin{itemize}
 \item $\dot{s}B\dot{w}\subseteq B\dot{s}\dot{w}B\cup B\dot{w}B$;
 \item $\dot{s}B\dot{s}\neq B$.
\end{itemize}
\end{enumerate}
\end{definition}

\begin{definition} \label{splitbnpair}
Let $G$ be a group with a $BN-pair$, $(B,N)$. It is a split $BN$-pair with a
characteristic $p$, if the following additional hypotheses are satisfied:
\begin{enumerate}
\item $B=UH$, with $U=O_p(B)$, the largest normal $p$-subgroup of $B$, and $H$ a complement of $U$;
\item $\cap_{n\in N}nBn^{-1}=H$.
\end{enumerate}

\end{definition}

The finite simple groups of Lie-type has a split $BN-pair$ with a characteristic $p$, where $p$ is the characteristic of the field over the matrix group is defined. Consider the easiest case of a simple group of Lie-type, which is $PSL_{2}(q)$. Although by \cite{HR} there is an $OGS$ for $PSL_{2}(q)$ and for $PSL_{3}(q)$, in this paper we look at it from a different point of view, connecting the $OGS$ to the $BN-pair$ decomposition. That connection gives motivation for generalizing it to further simple groups of Lie-type, even those which are not mentioned in \cite{HR}.  Moreover, the connection yields interesting recursive sequences over finite fields, which involve the Dickson polynomials of the second kind \cite{D} as well. Since the Dickson polynomial of the second kind is closely connected to the Chebyshev polynomial of the second kind \cite{rivlin}, namely $U_{n}(x)=E_{n}(2x,1)$ (where $U_{n}$ is the Chebyshev polynomial of the second kind, and $E_{n}$ is the Dickson polynomial of the second kind), studying the recursive sequences which are connected to the $OGS$ and the $BN-pair$ decomposition of $PSL_{2}(q)$ helps us to understand interesting properties of the Chebyshev polynomials as well.  Now, we look at $PSL_{2}(q)$, then the following properties are satisfied:

\begin{claim}\label{bn-matrix}
Let $G=PSL_{2}(q)$, and let $B, N, H, U, S$ be the groups as it is defined in Definitions \ref{bnpair} and \ref{splitbnpair}, then the following observations hold:
\begin{itemize}
\item $B$ is the subgroup in which coset representatives are the upper triangular matrices in $SL_{2}(q)$;
\item $N$ is the subgroup in which coset representatives are the monomial matrices in $SL_{2}(q)$  (i.e., there is exactly one non-zero entry in each row and in each column);
\item $H=B\cap N$ is the subgroup in which coset representatives are the diagonal matrices  in $SL_{2}(q)$. Thus, $H$ is isomorphic to a quotient of the multiplicative group of $\mathbb{F}_{q}^{*}$. Thus, denote the elements of $H$ by $h(y)$ where $y\in \mathbb{F}_{q}^{*}$;
\item $S=N/H$ is isomorphic to $\mathbb{Z}_2$. Denote by $s$ the element of $PSL_{2}(q)$, which corresponds to the non-trivial element of $S$;
\item $U$ is the subgroup in which coset representatives are the upper unipotent matrices in $SL_{2}(q)$ (i.e., the upper triangular matrices with diagonal entry $1$). Thus, $U$ is isomorphic to the additive group of $\mathbb{F}_{q}$. Thus, denote the elements of $U$ by $u(x)$ where $x\in \mathbb{F}_{q}$.
\end{itemize}
\end{claim}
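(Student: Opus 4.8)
The plan is to pass to $SL_2(q)$, write down the concrete matrix subgroups, check that their images in $PSL_2(q)=SL_2(q)/\{\pm I\}$ satisfy the axioms of Definitions \ref{bnpair} and \ref{splitbnpair}, and then read off the structural descriptions. Let $p$ be the characteristic of $\mathbb{F}_q$ and put, inside $SL_2(q)$,
\[
\hat U=\Bigl\{\left(\begin{smallmatrix}1&x\\0&1\end{smallmatrix}\right):x\in\mathbb{F}_q\Bigr\},\qquad \hat H=\Bigl\{\left(\begin{smallmatrix}y&0\\0&y^{-1}\end{smallmatrix}\right):y\in\mathbb{F}_q^{*}\Bigr\},
\]
let $\hat B=\hat U\hat H$ be the group of upper triangular matrices, $\hat N$ the group of monomial matrices, and let $s$ denote the image in $PSL_2(q)$ of $\left(\begin{smallmatrix}0&1\\-1&0\end{smallmatrix}\right)$. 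Write $B,N,H,U$ for the images of $\hat B,\hat N,\hat H,\hat U$ under $SL_2(q)\to PSL_2(q)$. With this notation the five bulleted statements become assertions about these images, and the task is to verify the $BN$-pair axioms for $(B,N)$ together with the split condition.

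First I would record the elementary facts inside $SL_2(q)$: an upper triangular monomial matrix is diagonal, so $\hat B\cap\hat N=\hat H$; the maps $\left(\begin{smallmatrix}y&0\\0&y^{-1}\end{smallmatrix}\right)\mapsto y$ and $x\mapsto\left(\begin{smallmatrix}1&x\\0&1\end{smallmatrix}\right)$ give $\hat H\cong\mathbb{F}_q^{*}$ and $(\mathbb{F}_q,+)\cong\hat U$; $\hat B=\hat U\rtimes\hat H$ with $\hat U\trianglelefteq\hat B$ and $\hat U\cap\hat H=\{I\}$; and $\hat N/\hat H$ has order $2$, generated by the class of $\left(\begin{smallmatrix}0&1\\-1&0\end{smallmatrix}\right)$. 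Since $-I=\left(\begin{smallmatrix}-1&0\\0&-1\end{smallmatrix}\right)\in\hat H$ while $\hat U\cap\{\pm I\}=\{I\}$, passing to $PSL_2(q)$ yields $U\cong(\mathbb{F}_q,+)$, $H\cong\mathbb{F}_q^{*}/(\hat H\cap\{\pm I\})$ (a quotient of $\mathbb{F}_q^{*}$), $B\cap N=H$, and $W=N/H\cong\mathbb{Z}_2=\langle s\mid s^{2}=1\rangle$ with $S=\{s\}$; in particular $s$ is an involution in $PSL_2(q)$ because $\left(\begin{smallmatrix}0&1\\-1&0\end{smallmatrix}\right)^{2}=-I$, and $(W,S)=(\mathbb{Z}_2,\{s\})$ is the rank one Coxeter system. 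This settles axioms (2) and (3) of Definition \ref{bnpair} and the assertions about $H$, $S$ and $U$.

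Next I would verify generation and the Bruhat-type axiom. For $G=\langle B,N\rangle$: conjugation by $s$ sends $\hat U$ onto the lower unipotent group $\hat U^{-}=\{\left(\begin{smallmatrix}1&0\\x&1\end{smallmatrix}\right)\}$, and $SL_2(q)$ is generated by the transvections, i.e.\ by $\hat U\cup\hat U^{-}$; hence $PSL_2(q)=\langle U,s\rangle\le\langle B,N\rangle$. For axiom (4), since $W=\{1,s\}$ the only nontrivial instance is $\dot sB\dot s\subseteq B\dot s\dot sB\cup B\dot sB$ together with $\dot sB\dot s\neq B$. Here I would compute, in $PSL_2(q)$, $s\,u(x)\,s=\left(\begin{smallmatrix}1&0\\-x&1\end{smallmatrix}\right)$: for $x=0$ this is the identity, and for $x\neq 0$ the identity $\left(\begin{smallmatrix}1&0\\x&1\end{smallmatrix}\right)=u(x^{-1})\,h(x^{-1})\,s\,u(x^{-1})$ (a direct $2\times2$ check in $PSL_2(q)$, with $h(y)=\left(\begin{smallmatrix}y&0\\0&y^{-1}\end{smallmatrix}\right)$) shows it lies in $\hat B s\hat B$; as $sHs=H$ and $sUs=U^{-}$, this gives $sBs=U^{-}H\subseteq B\cup BsB$, while the case $x\neq 0$ shows $sBs\not\subseteq B$. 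Finally, for Definition \ref{splitbnpair}: $U=O_p(B)$ because $U$ is a normal $p$-subgroup of $B$ (elementary abelian of exponent $p$, normalized by $H$) whereas $B/U\cong H$ has order prime to $p$, and $H$ is the stated complement; and $\bigcap_{n\in N}nBn^{-1}=H$, because $\hat H$ normalizes $\hat B$ while $s$ conjugates $\hat B$ to the lower triangular group, so the intersection sits inside $\hat B\cap s\hat Bs=\hat H$, and $\hat H\trianglelefteq\hat N$ is fixed by every $n\in N$.

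I do not anticipate a genuine obstacle: the argument is elementary $2\times2$ matrix algebra together with the standard fact that $SL_2(q)$ is generated by transvections. The one point needing care — and, arguably, the reason the claim is worth stating — is the passage from $SL_2(q)$ to $PSL_2(q)$: it is only in the quotient that $s$ becomes an involution (so that $(W,S)$ is a Coxeter system and axiom (3) holds) and that the diagonal torus collapses to $\mathbb{F}_q^{*}/\{\pm1\}$, whereas $U$ is unaffected since $-I\notin\hat U$. Keeping straight which identities live in $SL_2(q)$ and which in $PSL_2(q)$ is the only real subtlety.
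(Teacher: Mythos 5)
Your verification is correct, and it is exactly the standard argument: the paper itself states Claim \ref{bn-matrix} without proof, treating these as known facts about the $BN$-pair of $PSL_{2}(q)$, so your explicit check of the axioms of Definitions \ref{bnpair} and \ref{splitbnpair} via the concrete matrix subgroups $\hat U$, $\hat H$, $\hat B$, $\hat N$ simply fills in what the authors take for granted. Your identity $\left(\begin{smallmatrix}1&0\\x&1\end{smallmatrix}\right)=u(x^{-1})h(x^{-1})s\,u(x^{-1})$ does hold only up to the sign $-I$, i.e.\ in $PSL_{2}(q)$ rather than $SL_{2}(q)$, which is precisely the kind of subtlety you flag at the end, so there is nothing to correct.
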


By Definition \ref{bnpair}, $PSL_{2}(q)$ is generated by $u(x)$, $h(y)$ and $s$, where  $x\in \mathbb{F}_{q}$, and $y\in \mathbb{F}_{q}^{*}$, and the following proposition holds:

\begin{proposition}\label{bn-canon-psl}
Let $G=PSL_{2}(q)$, where $\mathbb{F}_{q}$ is a finite field, then the $BN-pair$ presentation of an element $g\in G$ is as follows:
\begin{itemize}
\item If $g\notin B$, then $g=u(a)\cdot s\cdot u(x)\cdot h(y)$, where $a,x\in \mathbb{F}_{q}$ and $y\in \mathbb{F}_{q}^{*}$;
\item If $g\in B$, then $g=u(x)\cdot h(y)$, where $x\in \mathbb{F}_{q}$ and $y\in \mathbb{F}_{q}^{*}$.
\end{itemize}
\end{proposition}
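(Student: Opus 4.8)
The plan is to prove the statement by a direct computation with the explicit coset representatives furnished by Claim \ref{bn-matrix}, splitting cases according to the bottom-left entry of a matrix lifting $g$ to $SL_{2}(q)$. Concretely, I would fix the representatives $u(x)=\begin{pmatrix}1 & x\\ 0 & 1\end{pmatrix}$, $h(y)=\begin{pmatrix}y & 0\\ 0 & y^{-1}\end{pmatrix}$, $s=\begin{pmatrix}0 & 1\\ -1 & 0\end{pmatrix}$, and write a lift of $g$ as $\begin{pmatrix}a & b\\ c & d\end{pmatrix}\in SL_{2}(q)$. Since by Claim \ref{bn-matrix} the group $U$ consists of the upper unipotent matrices and $H$ of the diagonal matrices, a matrix lies in $B=UH$ (the upper triangular matrices) exactly when $c=0$; thus the case split ``$g\in B$'' versus ``$g\notin B$'' in the proposition is literally the split ``$c=0$'' versus ``$c\neq 0$'', and it suffices to exhibit the claimed normal form in each case.

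When $c=0$ we have $ad=1$, and $\begin{pmatrix}a & b\\ 0 & a^{-1}\end{pmatrix}=u(ab)\,h(a)$, which is already of the form $u(x)h(y)$. When $c\neq 0$, multiplying out $u(\alpha)\,s\,u(\xi)\,h(\eta)$ yields $\begin{pmatrix}-\alpha\eta & (1-\alpha\xi)\eta^{-1}\\ -\eta & -\xi\eta^{-1}\end{pmatrix}$; comparing entries with $\begin{pmatrix}a & b\\ c & d\end{pmatrix}$ forces $\eta=-c$ (allowed since $c\neq 0$), $\alpha=ac^{-1}$, and $\xi=dc$, and the one remaining equation becomes $b=(ad-1)c^{-1}$, which holds automatically because $\det=ad-bc=1$. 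Hence $\begin{pmatrix}a & b\\ c & d\end{pmatrix}=u(ac^{-1})\,s\,u(dc)\,h(-c)$, which is of the asserted form; this settles existence in both cases.

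I would also record the conceptual version of the argument, since that is what is meant to generalize to other groups of Lie type. Since $W=N/H\cong\mathbb{Z}_{2}=\{1,s\}$ by Claim \ref{bn-matrix}, the Bruhat decomposition (a standard consequence of the $BN$-pair axioms in Definition \ref{bnpair}) collapses to $G=B\sqcup BsB$; and then $BsB=UHsUH=UsUH$ because $B=UH$ with $U\trianglelefteq B$ by the split structure (Definition \ref{splitbnpair}) and because $H=B\cap N$ is normalized by a representative of $s$ (axiom 2 of Definition \ref{bnpair}), so $Hs=sH$. Together with the factorization $B=UH$ of the cell $B$ itself, this reproduces precisely the two displayed forms.

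The single step that needs real care — and the one I would handle most explicitly — is the passage from $SL_{2}(q)$ to $PSL_{2}(q)=SL_{2}(q)/\{\pm I\}$, which is exactly where any uniqueness claim implicit in the word ``the'' presentation has to be checked. Replacing the chosen lift of $g$ by its negative leaves the unipotent parameters $\alpha,\xi$ (hence $x$ and the leading parameter in the big cell) unchanged but flips the sign of $\eta$; since $h(-1)=-I$ is trivial in $PSL_{2}(q)$, one has $h(\eta)=h(-\eta)$ there, so the normal form is well defined once $h(y)$ is read as an element of $H\cong\mathbb{F}_{q}^{*}/\{\pm1\}$, while $x$ (and the leading parameter) remain determined outright. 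This is exactly the slack the condition $r_{k}\mid|g_{k}|$ in Definition \ref{ogs} is built to absorb, and verifying it is the only non-routine part of the proof.
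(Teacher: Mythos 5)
Your proposal is correct, and it actually supplies more than the paper does: the paper offers no written argument for Proposition \ref{bn-canon-psl} beyond the remark that it is ``an immediate conclusion of the properties of $U$, $H$, and $S$'' from Definitions \ref{bnpair} and \ref{splitbnpair} --- i.e.\ exactly the Bruhat-decomposition argument $G=B\sqcup BsB$ with $BsB=UsUH$ that you record in your third paragraph. Your first two paragraphs add an explicit verification with the representative matrices of Definition \ref{bn-presentation-matrix}: the identification of $B$ with the matrices having lower-left entry $c=0$, the factorization $u(ac^{-1})\,s\,u(dc)\,h(-c)$ in the big cell (which I checked; the determinant condition $ad-bc=1$ does make the remaining entry match), and the factorization $u(ab)h(a)$ in $B$. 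This buys something concrete that the paper leaves implicit: it exhibits the parameters $a,x,y$ as explicit rational functions of the matrix entries, which is what later computations (Proposition \ref{uhs-multiply}, Corollary \ref{x-0}) actually rely on. Your final paragraph on the passage from $SL_{2}(q)$ to $PSL_{2}(q)$ --- the parameters $\alpha,\xi$ are invariant under $g\mapsto -g$ while $\eta$ flips sign, which is absorbed by $h(\eta)=h(-\eta)$ in $H\cong\mathbb{F}_{q}^{*}/\{\pm 1\}$ --- is not in the paper at all, but it is correct and is precisely the point needed to justify Conclusion \ref{coset-bn-unique} (that the leading parameter $a$ is a well-defined invariant of the coset $gB$). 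The only nitpick is notational: strictly you are computing with the lifts $\hat{u},\hat{h},\hat{s}$ of the paper's notation, and the uniqueness you discuss is slightly more than the proposition literally asserts, though it is needed immediately afterwards.
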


The proof is an immediate conclusion of the properties of $U$, $H$, and $S$ which have been described in Definitions \ref{bnpair}, \ref{splitbnpair}. One can easily conclude from Proposition \ref{bn-canon-psl} the following property:

\begin{conclusion}\label{coset-bn-unique}
Let $G=PSL_{2}(q)$, where $\mathbb{F}_{q}$ is a finite field, let $g_1, g_2$ be two elements of $G$, where
$$g_{1}=u(a_{1})\cdot s\cdot u(x_{1})\cdot h(y_{1})$$
and
$$g_{2}=u(a_{2})\cdot s\cdot u(x_{2})\cdot h(y_{2}).$$
Then,
$$a_{1}=a_{2}$$
if and only if
$$g_{1}B=g_{2}B ~~(i.e., ~g_{2}^{-1}g_{1}\in B).$$
\end{conclusion}

\begin{definition}\label{bn-presentation-matrix}
Let $G=PSL_{2}(q)$, $x\in \mathbb{F}_{q}$, $y\in \mathbb{F}_{q}^{*}$, let $u(x)$, $h(y)$, and $s$ be as it is defined in Claim \ref{bn-matrix}, and let $\hat{u}(x)$, $\hat{h}(y)$, and $\hat{s}$ be the corresponding coset representatives in $SL_{2}(q)$. Then we can choose the representative matrices as follows:
\begin{itemize}
\item $\hat{u}(x)=\begin{pmatrix} 1 & x \\
0 & 1\end{pmatrix}$;
\item $\hat{h}(y)=\begin{pmatrix} y & 0 \\
0 & y^{-1}\end{pmatrix}$;
\item $\hat{s}=\begin{pmatrix} 0 & 1 \\
-1 & 0\end{pmatrix}$.
\end{itemize}
\end{definition}

Now, we easily conclude the following multiplication laws between $u(x)$, $h(y)$, and $s$.

\begin{proposition}\label{uhs-multiply}
Let $G=PSL_{2}(q)$, $x\in \mathbb{F}_{q}$, $y\in \mathbb{F}_{q}^{*}$, let $u(x)$, $h(y)$, and $s$ be as it is defined in Claim \ref{bn-matrix}. Then the following relations hold:
\begin{itemize}
\item $u(x_1)\cdot u(x_2)=u(x_1+x_2)$;
\item $h(y_1)\cdot h(y_2)=h(y_1\cdot y_2)$;
\item $h(y)\cdot u(x)=u(x\cdot y^{2})\cdot h(y)$;
\item $s\cdot u(x)\cdot s=u(-x^{-1})\cdot s\cdot u(-x)\cdot h(x)$;
\item $s\cdot h(y)=h(y^{-1})\cdot s$.
\end{itemize}

\end{proposition}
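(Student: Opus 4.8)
The plan is to verify each of the five relations by a direct computation with the representative matrices $\hat{u}(x)$, $\hat{h}(y)$, $\hat{s}$ fixed in Definition \ref{bn-presentation-matrix}, keeping in mind throughout that an identity in $PSL_{2}(q)=SL_{2}(q)/\{\pm I\}$ is exactly an identity of the chosen $SL_{2}(q)$ representatives \emph{up to the central scalar} $-I$. Concretely, for each relation I would form the $2\times 2$ matrix product on each side and check that the two results either coincide or differ by the factor $-I$.

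The first two relations and the fifth already hold on the nose in $SL_{2}(q)$, with no passage to the quotient needed: $\hat{u}(x_{1})\hat{u}(x_{2})$ is the upper unipotent matrix with upper-right entry $x_{1}+x_{2}$; the product of the diagonal matrices $\hat{h}(y_{1})$ and $\hat{h}(y_{2})$ is the diagonal matrix with entries $y_{1}y_{2}$ and $(y_{1}y_{2})^{-1}$, i.e. $\hat{h}(y_{1}y_{2})$; and $\hat{s}\hat{h}(y)$ and $\hat{h}(y^{-1})\hat{s}$ are both equal to the anti-diagonal matrix $\begin{pmatrix} 0 & y^{-1} \\ -y & 0 \end{pmatrix}$. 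The third relation is the observation that left multiplication of $\hat{u}(x)$ by $\hat{h}(y)$ produces $\begin{pmatrix} y & yx \\ 0 & y^{-1}\end{pmatrix}$, which is precisely $\hat{u}(xy^{2})\hat{h}(y)$ — again an exact identity of representatives.

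The fourth relation is the only point where care is needed. First, it is implicitly a statement for $x\in\mathbb{F}_{q}^{*}$, since $u(-x^{-1})$ occurs on the right-hand side; for $x=0$ one instead has $s\cdot u(0)\cdot s=s^{2}=1$, because $\hat{s}^{2}=-I$. Assuming $x\neq 0$, I would compute the left-hand side $\hat{s}\hat{u}(x)\hat{s}=\begin{pmatrix} -1 & 0 \\ x & -1\end{pmatrix}$ and the right-hand side $\hat{u}(-x^{-1})\hat{s}\hat{u}(-x)\hat{h}(x)=\begin{pmatrix} 1 & 0 \\ -x & 1\end{pmatrix}$ step by step; the two products come out to be negatives of one another, hence represent the same element of $PSL_{2}(q)$, which is exactly where the quotient by $\{\pm I\}$ enters.

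There is no conceptual obstacle here: the content is a short chain of $2\times 2$ multiplications. The only subtleties worth flagging explicitly are that every equality is read modulo $\pm I$ (which is precisely what makes relation four balance) and that relation four presupposes $x\neq 0$. One could alternatively sidestep the explicit computation of relation four by deriving it from the other four relations together with $\hat{s}^{2}=-I$, but the direct matrix check is the most transparent route.
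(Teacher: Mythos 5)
Your proposal is correct and takes essentially the same route as the paper, which simply asserts that the relations follow from direct $2\times 2$ matrix multiplication; you carry out those multiplications explicitly and your computations check out. Your added observations --- that the fourth relation only balances modulo the central element $-I$ (and so genuinely uses the passage to $PSL_{2}(q)$, except in characteristic $2$ where $-I=I$) and that it presupposes $x\neq 0$ --- are accurate refinements the paper leaves implicit.
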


The proof comes directly from the properties of $2\times 2$ matrix multiplications.

\subsection{Dickson polynomials and related polynomials over finite fields}

Now, we recall the definition of the Dickson polynomials  of the second kind \cite{D} over a finite field $\mathbb{F}_{q}$, we show some important properties of it, and we define some other polynomials over $\mathbb{F}_{q}$, which are closely related to Dickson polynomials, and are used in the paper.

\begin{definition}\label{alpha}
For  $a\in \mathbb{F}_{q}$, define $\alpha_{-1}(a)$ to be $0$, and for $k\geq 0$, define $\alpha_{k}(a)$ to be $E_{k}(a,1)$, the Dickson polynomial of the second kind of degree $k$ \cite{D} over $\mathbb{F}_{q}$, on variable $a$ and with parameter $1$, i.e.:
\begin{itemize}
\item For $k=2r$: $$\alpha_{2r}(a)=E_{2r}(a,1)=\sum_{i=0}^{r}(-1)^{i}{2r-i \choose i}a^{2r-2i}$$
\item For $k=2r+1$: $$\alpha_{2r+1}(a)=E_{2r+1}(a,1)=\sum_{i=0}^{r}(-1)^{i}{2r-i+1 \choose i}a^{2r-2i+1}.$$
\end{itemize}
\end{definition}

\begin{remark}\label{chebyshev-alpha}
In case of odd $q$, the Dickson polynomial of the second kind on variable $a$ and with parameter $1$ is the same as the Chebyshev polynomial of the second kind over $\mathbb{F}_{q}$ on variable $\frac{a}{2}$ \cite{rivlin}.
\end{remark}

\begin{proposition}\label{alpha-recursive}
For  $a\in \mathbb{F}_{q}$ and $k\geq 0$,  the following holds:
$$\alpha_{k+1}(a)=a\cdot \alpha_{k}(a)-\alpha_{k-1}(a).$$
\end{proposition}

\begin{proof}
The proof follows directly from the properties of the Dickson polynomials.
\end{proof}
\begin{definition}\label{beta}
For $a, b\in \mathbb{F}_{q}$ and  $k\geq -1$, define $\beta_{k}(a,b)$ and $\gamma_{r}(a,b)$ as follow
\begin{itemize}
\item $\beta_{-1}(a,b)=1$;
\item $\beta_{k}(a,b)=b\cdot\alpha_{k}(a)-\alpha_{k-1}(a)$, ~where  $k\geq 0$;
\item $\gamma_{k}(a,b)=\alpha_{k}(a)+b\cdot\beta_{k}(a,b)$.
\end{itemize}
\end{definition}

\begin{remark}\label{beta-gamma-linear}
Since for every $k\geq 1$, both $\beta_{k}(a,b)$ and $\gamma_{k}(a,b)$ are linear combinations of elements of the form $\alpha_{r}(a)$,  we have the following recurrence relations
$$\beta_{k+1}(a,b)=a\cdot \beta_{k}(a,b)-\beta_{k-1}(a,b), ~~~~\gamma_{k+1}(a,b)=a\cdot \gamma_{k}(a,b)-\gamma_{k-1}(a,b).$$
\end{remark}

\begin{proposition}\label{chebyshev}
Let $q=p^{n}$ such that $p$ is an odd prime, and for every $k\geq 0$, $a, b\in \mathbb{F}(q)$, let $\alpha_{k}(a)$ and $\beta_{k}(a,b)$ as it is defined in Definitions \ref{alpha}, \ref{beta}. Assume $b=\frac{a}{2}$. Then for every $k\geq 0$ the following holds:
\begin{itemize}
\item $\beta_{k-1}(a,b)=T_{k}(b)$, where $T_{k}(b)$ is the Chebyshev polynomial of the first kind of degree $k$  on variable $b$ over $\mathbb{F}_{q}$;
\item $\alpha_{k}(a)=U_{k}(b)$, where $U_{k}(b)$ is the Chebyshev polynomial of the second kind of degree $k$ on variable $b$ over $\mathbb{F}_{q}$.
\end{itemize}
\end{proposition}

\begin{proof}
By Definition \ref{beta}, $\beta_{-1}(a,b)=1$ and $\beta_{0}(a,b)=b$. Now, since $a=2b$, by applying Remark \ref{beta-gamma-linear}, we have $\beta_{k+1}(a,b)=2b\cdot \beta_{k}(a,b)-\beta_{k-1}(a,b)$, which implies $\beta_{k-1}(a,b)=T_{k}(b)$ for every $k\geq -1$.
Since $b=\frac{a}{2}$, we have $\alpha_{k}(a)=U_{k}(b)$ by immediate application of Remark \ref{chebyshev-alpha}.
\end{proof}

\begin{proposition}\label{fibonacci}
Let $q=p^{n}$ such that $p$ is an odd prime, and for every $k\geq 0$, $a, b\in \mathbb{F}(q)$, let $\alpha_{k}(a)$ and $\beta_{k}(a,b)$ as it is defined in Definitions \ref{alpha}, \ref{beta} and let $Fib_{k}$ be the $k$-th element of the Fibonacci sequence over $\mathbb{F}_{q}$. If $a=3$ and $b=1$ then for every $k\geq 0$ the following are satisfied:
\begin{itemize}
\item $\beta_{k}=Fib_{2k+1}$;
\item $\alpha_{k}=Fib_{2k+2}$.
\end{itemize}
\end{proposition}

\begin{proof}
Since for every $k\geq 0$, $Fib_{k+2}=Fib_{k+1}+Fib_{k}$, the following recurrence relation is satisfied:
$$Fib_{k+4}=2\cdot Fib_{k+2}+Fib_{k+1}=3\cdot Fib_{k+2}-(Fib_{k+2}-Fib_{k+1})=3\cdot Fib_{k+2}-Fib_{k}.$$
By Definition  \ref{alpha}, $\alpha_{0}(3)=1=Fib_{2}$, $\alpha_{1}(3)=3=Fib_{4}$ and since $b=1$, by Definition \ref{beta}, $\beta_{0}(3,1)=b=1=Fib_{1}$ and $\beta_{1}(3,1)=\alpha_{1}(3)-\alpha_{0}(3)=3-1=2=Fib_{3}$. Since by Proposition \ref{alpha-recursive}, $\alpha_{k+1}(3)=3\alpha_{k}(3)-\alpha_{k-1}(3)$, and by Proposition \ref{beta-gamma-linear}, $\beta_{k+1}(3,1)=3\cdot \beta_{k}(3,1)-\beta_{k-1}(3,1)$, we conclude the results of the proposition.
\end{proof}
\\

In this paper, we generalize Proposition \ref{bn-canon-psl} in the following way: Theorem \ref{main} demonstrates that every element of $g\in PSL_{2}(q)$ has a unique presentation of the form:

 $$g=[u(a)\cdot s]^{k}\cdot u(x)\cdot h(y),$$
where $0\leq k\leq q$, $\mathbb{F}_{q}$ has characteristic $2$, $x,y\in \mathbb{F}_{q}$ such that $y\neq 0$. The element $a\in \mathbb{F}_{q}$ is chosen such that the polynomial $\lambda^{2}-a\lambda+1$ is an irreducible polynomial over $\mathbb{F}_{q}$ and the root $\omega$ of the polynomial has order $q+1$ in $\mathbb{F}_{q^2}^{*}$;

$$g=[u(a)\cdot s]^{k}\cdot [u(b)\cdot s\cdot u(-b)]^{\ell}\cdot u(x)\cdot h(y),$$
where $0\leq k<\frac{q+1}{2}$, $0\leq \ell\leq 1$ (i. e. $l=0$ or $l=1$),  $\mathbb{F}_{q}$ does not have characteristic $2$, $x,y\in \mathbb{F}_{q}$ such that $y\neq 0$. The element $a\in \mathbb{F}_{q}$ is chosen such that the polynomial $\lambda^{2}-a\lambda+1$ is an irreducible polynomial over $\mathbb{F}_{q}$ and the root $\omega$ of the polynomial has order $q+1$ in  $\mathbb{F}_{q^2}^{*}$. The element $b\in \mathbb{F}_{q}$ is chosen in a way such that $u(b)\cdot s$ is not a left-coset representative of $[u(a)\cdot s]^{k}$ in $B$ for any $0\leq k\leq q$.

\begin{remark}
Since $U$ and $H$ are abelian groups, where $U$ is a direct sum of copies of $\mathbb{Z}_{p}$ such that $q=p^{\kappa}$ for some $\kappa$ (i.e., $p$ is the characteristic of $\mathbb{F}_{q}$), and $H$ is abelian as a quotient of the multiplicative group of $\mathbb{F}_{q}$, the elements $u(a)\cdot s$, ~$u(b)\cdot s\cdot u(-b)$, ~the $\kappa$ copies of $\mathbb{Z}_{p}$ (which generates the additive group of $\mathbb{F}_{q}$), and  an $OGS$ of $H$ ~ forms an $OGS$ for $PSL_{2}(q)$.
\end{remark}

Then, we look at the $BN-pair$ presentation of the elements $[u(a)\cdot s]^{k}$ and \\ $[u(a)\cdot s]^{\ell}\cdot [u(b)\cdot s\cdot u(-b)]$, $for 1\leq k\leq t-1$, and $0\leq \ell\leq t-1$, where $t=\frac{q+1}{gcd(2,q+1)}$:

$$[u(a)\cdot s]^{k}=u(a_k)\cdot s\cdot u(x_k)\cdot h(y_k)$$

$$[u(a)\cdot s]^{\ell}\cdot [u(b)\cdot s\cdot u(-b)]=u(b_\ell)\cdot s\cdot u({x'}_\ell)\cdot h({y'}_\ell).$$

We show that $a_{k}$, $x_{k}$, $y_{k}$, $b_{\ell}$, ${x'}_{\ell}$, and ${y'}_{\ell}$  are closely connected to the Dickson polynomial of the second kind over $\mathbb{F}_{q}$ (in case where $b=\frac{a}{2}$, by Corollary \ref{chebyshev}, the sequences $b_{\ell}$, ${x'}_{\ell}$, and ${y'}_{\ell}$ are closely connected to the Dickson and the Chebyshev polynomial of the first kind as well), and we connect it to the structure of the finite simple group $PSL_{2}(q)$.

\section{The connection of $OGS$ and $BN-pair$ presentation of $PSL_{2}(q)$ to Dickson polynomials}

In this section, we introduce recursive sequences over $\mathbb{F}_{q}$, which connects an $OGS$ presentation to the $BN-pair$ presentation of $PSL_{2}(q)$. We show some interesting properties of the sequences, and we show also, that the sequences are closely connected to Dickson and Chebyshev polynomials over $\mathbb{F}_{q}$.

\begin{definition}\label{pa}
For $a\in \mathbb{F}$, define $P_{a}(\lambda)$ to be the polynomial
$$P_{a}(\lambda)=\lambda^{2}-a\cdot \lambda+1.$$
\end{definition}

\begin{proposition}\label{pa-property}
Let $P_{a}(\lambda)$ be the polynomial as it is defined in Definition \ref{pa}, then the following holds:
\begin{itemize}
\item $P_{a}(\lambda)$ is the characteristic polynomial of the representative matrix for $u_{a}\cdot s$;
\item $P_{a}(0)=P_{a}(a)=1$;
\item $P_{a}(r)=P_{a}(a-r)$ for every $a, r\in \mathbb{F}_{q}$;
\item $P_{a}(\lambda)=r$ has a double root if and only if the following holds
\begin{itemize}
\item $q$ is odd;
\item The root $\lambda=\frac{a}{2}$;
\item $r=1-\frac{a^{2}}{4}$.
\end{itemize}
\item $P_{3}(1)=-1$.
\end{itemize}
\end{proposition}

\begin{proposition}\label{coset-order}
Let $a\in \mathbb{F}_{q}$, such that the polynomial $P_{a}(\lambda)$ is irreducible over $\mathbb{F}_{q}$, then the following holds:
\begin{itemize}
\item The root $\omega$ of $P_{a}(\lambda)$ has order which divides $q+1$ in the multiplicative group $\mathbb{F}_{q^2}^{*}$;
\item It is possible to choose $a\in \mathbb{F}_{q}$, such that the root $\omega$ of $P_{a}(\lambda)$ has order $q+1$ in the multiplicative group $\mathbb{F}_{q^2}^{*}$;
\item For  $a\in \mathbb{F}_{q}$, such that the root of $P_{a}(\lambda)$ has order $q+1$ in the multiplicative group $\mathbb{F}_{q^2}^{*}$, it is satisfied that the order of the element $u(a)\cdot s$ in $PSL_{2}(q)$ is $\frac{q+1}{gcd(2, q+1)}$.
 \end{itemize}
\end{proposition}

\begin{proof}
The first two parts of the proposition are obvious by basic theory of finite fields. Since by Proposition \ref{pa-property} $P_{a}(\lambda)$ is the characteristic polynomial of $u(a)\cdot s$, the third part is obvious by Cayley-Hamilton Theorem.
\end{proof}

\begin{theorem}\label{main}
Let $G=PSL_{2}(q)$, where $\mathbb{F}_{q}$ is a finite field of order $q$. Let $a\in \mathbb{F}_{q}$, such that the polynomial $P_{a}(\lambda)$ is irreducible over $\mathbb{F}_{q}$ and the root $\omega$ of the polynomial has order $q+1$ in  $\mathbb{F}_{q^2}^{*}$. In case of an odd $q$, let $b\in \mathbb{F}_{q}$, such that  $u(b)\cdot s$ is not a left-coset representative of $[u(a)\cdot s]^{k}$ in $B$ for any $0\leq k<\frac{q+1}{2}$.  Then every $g\in G$  has a unique presentation in the following form:

\begin{itemize}
\item If the field $\mathbb{F}_{q}$ has characteristic $2$: $$g=[u(a)\cdot s]^{k}\cdot u(x)\cdot h(y)$$ where $x\in \mathbb{F}_{q}$, $y\in \mathbb{F}_{q}^{*}$, and $0\leq k\leq q$;
\item If the field $\mathbb{F}_{q}$ has an odd characteristic: $$g=[u(a)\cdot s]^{k}\cdot [u(b)\cdot s\cdot u(-b)]^{\ell}\cdot u(x)\cdot h(y)$$ where $x\in \mathbb{F}_{q}$, $y\in \mathbb{F}_{q}^{*}$ ,$0\leq k<\frac{q+1}{2}$, $0\leq \ell\leq 1$ (i. e. $\ell=0$ or $\ell=1$).
\end{itemize}
\end{theorem}

\begin{proof}
First, we consider the case in which $\mathbb{F}_{q}$ has characteristic $2$. In that case, $PSL_{2}(q)=SL_{2}(q)$, and by Proposition \ref{coset-order}, there exists an element $a\in \mathbb{F}_{q}$, such that the order of the element $u(a)\cdot s$ is $q+1$. By Claim \ref{bn-matrix}, the  elements of the form $u(x)\cdot h(y)$ such that $x\in \mathbb{F}_{q}$ and $y\in \mathbb{F}_{q}^{*}$ form the subgroup $B$, which can be considered as the upper triangular matrices in $SL_{2}(q)$ (which is $PSL_{2}(q)$ in case of $q=2^m$). Therefore, $B=\{u(x)\cdot h(y) | x\in \mathbb{F}_{q}, y\in \mathbb{F}_{q}^{*}\}$  is a subgroup of order $q\cdot (q-1)$, where in case of $q=2^{m}$ that order is prime to $q+1$, the order of of $u(a)\cdot s$. Since the index of $B$ in $PSL_{2}(q)$ is $q+1$ too, the $q+1$ left-coset representatives of $B$ in $PSL_{2}(q)$ can be considered the elements which have the form of $[u(a)\cdot s]^{k}$, where $0\leq k\leq q$. Thus, every $g\in PSL_{2}(q)$ has a unique presentation of the form $$g=[u(a)\cdot s]^{k}\cdot u(x)\cdot h(y)$$ where $0\leq k\leq q$ and the $\mathbb{F}_{q}$ has characteristic $2$.
Now, we consider the case where $q$ is an odd prime. In this case, $PSL_{2}(q)$ is a proper quotient of $SL_{2}(q)$. Like in the case of $q=2^{m}$, the subgroup $B=\{u(x)\cdot h(y) | x\in \mathbb{F}_{q}, y\in \mathbb{F}_{q}^{*}\}$ has index $q+1$ in $PSL_{2}(q)$, but the order of $B$ is $\frac{q\cdot(q+1)}{2}$ (since $h(y)=h(-y)$ for every $y\in \mathbb{F}_{q}^{*}$), and by Proposition \ref{coset-order}, there exists an element $a\in \mathbb{F}_{q}$, such that the order of the element $u(a)\cdot s$ is just $\frac{q+1}{2}$. Thus all the elements of the form $$[u(a)\cdot s]^{k},$$ where $0\leq k<\frac{q+1}{2}$ gives just a half of the left-coset representatives of $B$ in $PSL_{2}(q)$, where $q$ is an odd prime. Now, we choose an element $b\in \mathbb{F}_{q}$, such that $$u(b)\cdot s\notin [u(a)\cdot s]^{k}B,$$
for any $0\leq k<\frac{q+1}{2}$. Then,
$$u(b)\cdot s\cdot u(-b)$$
is an involution conjugate to $s$, which does not belong to $$[u(a)\cdot s]^{k}B,$$ for any $0\leq k<\frac{q+1}{2}$ too. Now, we show that the $q+1$ elements of the form
$$[u(a)\cdot s]^{k}\cdot [u(b)\cdot s\cdot u(-b)]^{\ell},$$
where $0\leq k<\frac{q+1}{2}$ and $0\leq \ell\leq 1$, form a full left-coset representative of $B$ in $PSL_{2}(q)$ for an odd $q$. It is enough to show that all $q+1$ cosets of the form
$$[u(a)\cdot s]^{k}\cdot [u(b)\cdot s\cdot u(-b)]^{\ell}B$$
are different, where $0\leq k<\frac{q+1}{2}$ and $0\leq \ell\leq 1$.
Assume
$$[u(a)\cdot s]^{k_1}\cdot [u(b)\cdot s\cdot u(-b)]^{{\ell}_{1}}B=[u(a)\cdot s]^{k_2}\cdot [u(b)\cdot s\cdot u(-b)]^{{\ell}_{2}}B,$$
where $0\leq k_1, k_2<\frac{q+1}{2}$, and $0\leq {\ell}_{1},{\ell}_{2}\leq 1$. If ${\ell}_{1}={\ell}_{2}=0$, the equation implies that
 $$[u(a)\cdot s]^{k_1-k_2}\in B,$$
which is possible only if $k_1-k_2=0$, since the order of $[u(a)\cdot s]^{k_1-k_2}$ is otherwise prime to the order of $B$. If ${\ell}_{1}={\ell}_{2}=1$, the equation implies that
$$[u(b)\cdot s\cdot u(-b)]\cdot[u(a)\cdot s]^{k_1-k_2}\cdot [u(b)\cdot s\cdot u(-b)]\in B,$$ which is also possible only if $k_1-k_2=0$ by the same reason. If ${\ell}_{1}=1$ and ${\ell}_{2}=0$, then the equation implies that
$$[u(b)\cdot s\cdot u(-b)]B=[u(a)\cdot s]^{k_1-k_2}B,$$ which is impossible by the definition of $b$. Thus, all the $q+1$ cosets of the form
$$[u(a)\cdot s]^{k}\cdot [u(b)\cdot s\cdot u(-b)]^{\ell}B$$
are different.
\end{proof}
\\

The rest of the paper deals with the $OGS$ of $PSL_{2}(q)$, which has been showed in Theorem \ref{main}, and its connection to the $BN-pair$ decomposition and to Dickson and Chebyshev polynomials over $\mathbb{F}_{q}$. Hence, from now on, we use the following notation
\begin{itemize}
\item Denote by $a$ an element of $\mathbb{F}_{q}$, such that the root $\omega$ of the polynomial $P_{a}(\lambda)$ has order $q+1$ in  $\mathbb{F}_{q^2}^{*}$;
\item In case of an odd $q$, denote by $b$ an element of $\mathbb{F}_{q}$, such that $u(b)\cdot s$ is not a left-coset representative of $[u(a)\cdot s]^{k}$ in $B$ for any $0\leq k<\frac{q+1}{2}$;
\item Denote by $t$ the order of $u(a)\cdot s$ (i.e., $t=\frac{q+1}{gcd(2, q-1)}$).
\end{itemize}

\begin{proposition}\label{ab-unique}
Let $G=PSL_{2}(q)$. Consider the $BN-pair$ presentation of  $$[u(a)\cdot s]^{k},$$
for $1\leq k\leq t-1$, and in case of odd $q$, consider the $BN-pair$ presentation of $$[u(a)\cdot s]^{\ell}\cdot [u(b)\cdot s\cdot u(-b)]$$
too, for $0\leq \ell\leq t-1$.   If
$$[u(a)\cdot s]^{k}=u(a_k)\cdot s\cdot u(x_k)\cdot h(y_k),$$ for some $x_k\in \mathbb{F}_{q}$, $y_k\in \mathbb{F}_{q}^{*}$, and
$$[u(a)\cdot s]^{\ell}\cdot [u(b)\cdot s\cdot u(-b)]=u(b_k)\cdot s\cdot u({x'}_{\ell})\cdot h({y'}_{\ell}),$$ for some ${x'}_{\ell}\in \mathbb{F}_{q}$, ${y'}_{\ell}\in \mathbb{F}_{q}^{*}$

then

we get the following observations:
\begin{itemize}
\item In case $\mathbb{F}_{q}$ has characteristic $2$, the $q$ elements $a_{1}, a_{2}, \ldots, a_{q}$ are all the $q$ different elements of $\mathbb{F}_{q}$;
\item In case of $\mathbb{F}_{q}$ has an odd characteristic, the $q$ elements \\ $a_{1}, a_{2}, \ldots, a_{t-1}, b_{0}, b_{1}, \ldots, b_{t-1}$ are all the $q$ different elements of $\mathbb{F}_{q}$.
\end{itemize}
\end{proposition}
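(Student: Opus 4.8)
The plan is to combine Theorem \ref{main} with Conclusion \ref{coset-bn-unique} and a counting argument; essentially no new computation is needed. The key observation is that Conclusion \ref{coset-bn-unique} says exactly that the map sending an element $g\notin B$ to the first coefficient $a$ in its $BN$-pair presentation $g=u(a)\cdot s\cdot u(x)\cdot h(y)$ is constant on each left coset $gB$ and separates distinct left cosets. Since $[G:B]=q+1$ and one of those cosets is $B$ itself, this map induces an injection from the $q$ non-trivial left cosets of $B$ into $\mathbb{F}_{q}$, hence (by cardinality) a bijection onto $\mathbb{F}_{q}$. So it suffices to exhibit the $q$ scalars listed in the proposition as the images of $q$ pairwise distinct non-trivial cosets.

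First I would treat the characteristic $2$ case, where $t-1=q$. By Proposition \ref{coset-order} the element $u(a)\cdot s$ has order $q+1$, so $[u(a)\cdot s]^{k}$, $0\leq k\leq q$, are $q+1$ distinct powers, and by Theorem \ref{main} they represent the $q+1$ pairwise distinct left cosets of $B$. Exactly one of them, $k=0$, represents the coset $B$; hence for $1\leq k\leq q$ the element $[u(a)\cdot s]^{k}$ lies outside $B$, so its $BN$-pair presentation genuinely has the form $u(a_{k})\cdot s\cdot u(x_{k})\cdot h(y_{k})$, and the cosets $[u(a)\cdot s]^{k}B$, $1\leq k\leq q$, are pairwise distinct and non-trivial. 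By the previous paragraph the $a_{k}$ are pairwise distinct, and being $q$ elements of $\mathbb{F}_{q}$ they exhaust $\mathbb{F}_{q}$.

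The odd case runs identically, using the coset description from Theorem \ref{main}: the $2t=q+1$ elements $[u(a)\cdot s]^{k}\cdot [u(b)\cdot s\cdot u(-b)]^{\ell}$ with $0\leq k<t$ and $\ell\in\{0,1\}$ represent the $q+1$ distinct left cosets of $B$, and only the pair $(k,\ell)=(0,0)$ gives the coset $B$. Therefore the cosets $[u(a)\cdot s]^{k}B$ for $1\leq k\leq t-1$ (case $\ell=0$) together with the cosets $[u(a)\cdot s]^{k}\cdot[u(b)\cdot s\cdot u(-b)]B$ for $0\leq k\leq t-1$ (case $\ell=1$) form $q$ pairwise distinct non-trivial cosets, and all the corresponding elements lie outside $B$, so their $BN$-pair presentations really have the stated shapes $u(a_{k})\cdot s\cdot u(x_{k})\cdot h(y_{k})$ and $u(b_{k})\cdot s\cdot u({x'}_{k})\cdot h({y'}_{k})$. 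Applying the bijection of the first paragraph, the $q$ scalars $a_{1},\dots,a_{t-1},b_{0},\dots,b_{t-1}$ are pairwise distinct, hence run through all of $\mathbb{F}_{q}$.

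The only point that needs a little care — and the one I would flag as the main (and minor) obstacle — is checking that each listed element actually lies outside $B$, so that Conclusion \ref{coset-bn-unique} is applicable to it. This is precisely what the "all $q+1$ cosets are distinct" part of Theorem \ref{main} delivers, once one notes that the trivial coset $B$ is accounted for by the single excluded index ($k=0$ in characteristic $2$, and $(k,\ell)=(0,0)$ in odd characteristic). Everything else is bookkeeping with the index ranges $t=\frac{q+1}{\gcd(2,q+1)}$.
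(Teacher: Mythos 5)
Your proposal is correct and follows essentially the same route as the paper: both invoke Theorem \ref{main} to get that the listed elements represent the $q$ pairwise distinct non-trivial left cosets of $B$, then apply Conclusion \ref{coset-bn-unique} to deduce the first coefficients are pairwise distinct, and finish by counting. Your extra care in verifying that each listed element lies outside $B$ (so the presentation $u(a_k)\cdot s\cdot u(x_k)\cdot h(y_k)$ applies) is a point the paper passes over silently, but it is handled exactly as you suggest.
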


\begin{proof}
By Theorem \ref{main}, the $q$ elements of the form
$$[u(a)\cdot s]^{k}$$
are the $q$ different left-coset representatives of all the elements which are not in $B$, in case $\mathbb{F}_{q}$ has characteristic $2$. Thus, by applying Conclusion \ref{coset-bn-unique}, $a_{1}, a_{2}, \ldots, a_{q}$ are $q$ different elements of $\mathbb{F}_{q}$, and since $q$ is finite, obviously, these elements are all the $q$ different elements of $\mathbb{F}_{q}$. Similarly, in case of odd $q$, the elements
$$[u(a)\cdot s]^{k},$$ where $1\leq k<t$, and
$$[u(a)\cdot s]^{\ell}\cdot [u(b)\cdot s\cdot u(-b)],$$
where $0\leq \ell<t$ are the $q$ different left-coset representatives of all the elements which are not in $B$. Thus, by the same argument as in the case of even $q$, we get the desired result.
\end{proof}

Now, we present a recursive algorithm to find  $a_{k}$ and  $b_{\ell}$, and we also describe some interesting properties of these elements.

\begin{proposition}\label{t-1}
Let $G=PSL_{2}(q)$.  For $1\leq k\leq t-1$, let $a_{k}$ be elements of $\mathbb{F}_{q}$ as it is defined in Proposition \ref{ab-unique}. Then, $a_{t-1}=0$.
\end{proposition}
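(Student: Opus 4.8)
The plan is to recognise $[u(a)\cdot s]^{t-1}$ as the inverse of $u(a)\cdot s$ and then read off its $BN$-pair presentation almost for free. By Proposition~\ref{coset-order} the order of $u(a)\cdot s$ in $PSL_{2}(q)$ is exactly $t=\frac{q+1}{\gcd(2,q+1)}$ (which is precisely why $a$ is taken, as in Propositions~\ref{irreducible-root}--\ref{coset-order}, so that the root $\omega$ of $\lambda^{2}+a\lambda+1$ has order $q+1$), hence
$$[u(a)\cdot s]^{t-1}=[u(a)\cdot s]^{-1}=s^{-1}\cdot u(a)^{-1}.$$
By the relation $u(x_{1})\cdot u(x_{2})=u(x_{1}+x_{2})$ of Proposition~\ref{uhs-multiply} we have $u(a)^{-1}=u(-a)$, and since $\hat{s}^{2}=-I$ the element $s$ satisfies $s^{2}=e$ in $PSL_{2}(q)$ (already in $SL_{2}(q)$ when $q$ is even), so $s^{-1}=s$; therefore $[u(a)\cdot s]^{t-1}=s\cdot u(-a)$.

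Now I would simply observe that $s\cdot u(-a)$ is already written in the canonical form of Proposition~\ref{bn-canon-psl}. It lies outside $B$ (since $u(-a)\in B$ but $s\notin B$; equivalently, the representative matrix $\hat{s}\,\hat{u}(-a)=\begin{pmatrix}0&1\\-1&a\end{pmatrix}$ is not upper triangular), so the first branch of Proposition~\ref{bn-canon-psl} applies, and
$$[u(a)\cdot s]^{t-1}=s\cdot u(-a)=u(0)\cdot s\cdot u(-a)\cdot h(1).$$
Comparing this with the defining equality $[u(a)\cdot s]^{t-1}=u(a_{t-1})\cdot s\cdot u(x_{t-1})\cdot h(y_{t-1})$ of Proposition~\ref{ab-unique} and invoking the uniqueness of the $BN$-pair presentation in Proposition~\ref{bn-canon-psl}, we obtain $a_{t-1}=0$ (and, as a by-product, $x_{t-1}=-a$ and $y_{t-1}=1$).

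I do not expect any serious obstacle: the whole argument hinges only on $t$ being the \emph{exact} order of $u(a)\cdot s$ (Proposition~\ref{coset-order}) and on $s$ being an involution in $PSL_{2}(q)$, the latter being what makes $[u(a)\cdot s]^{-1}$ collapse to $s\cdot u(-a)$ with trivial leading $u$-factor. The one routine check is $s\cdot u(-a)\notin B$, needed to select the correct branch of Proposition~\ref{bn-canon-psl} before applying its uniqueness clause. If one prefers to bypass the group manipulation entirely, the same conclusion drops out of a direct $2\times 2$ computation: the $(1,1)$-entry of $\hat{u}(a_{t-1})\,\hat{s}\,\hat{u}(x_{t-1})\,\hat{h}(y_{t-1})$ equals $-a_{t-1}\,y_{t-1}$, while a representative of $[u(a)\cdot s]^{t-1}$ has $(1,1)$-entry $0$, and since $y_{t-1}\neq 0$ this again forces $a_{t-1}=0$.
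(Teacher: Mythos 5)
Your proposal is correct and follows essentially the same route as the paper: both identify $[u(a)\cdot s]^{t-1}$ with $[u(a)\cdot s]^{-1}=s\cdot u(-a)=u(0)\cdot s\cdot u(-a)\cdot h(1)$ and read off $a_{t-1}=0$ from the uniqueness of the $BN$-pair presentation. Your additional remarks (the explicit check that $s\cdot u(-a)\notin B$, and the matrix verification) are just fuller justifications of steps the paper leaves implicit.
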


\begin{proof}
$$[u(a)\cdot s]^{t-1}=[u(a)\cdot s]^{-1}=s\cdot u(-a)=u(0)\cdot s\cdot u(-a)\cdot h(1).$$
Thus by its definition in Proposition \ref{ab-unique},
$$a_{t-1}=0.$$
\end{proof}

\begin{proposition}\label{ab-recursive}
Let $G=PSL_{2}(q)$. For every $1\leq k\leq t-1$ and $0\leq \ell\leq t-1$, let $a_{k}, b_{\ell}$ be elements of $\mathbb{F}_{q}$ as it is defined in Propositions \ref{ab-unique}. Then the following holds:
\begin{itemize}
\item  $a_{k+1}=a-a_{k}^{-1}$;
\item $b=b_{0}=a-b_{t-1}^{-1}$;
\item $b_{\ell+1}=a-b_{\ell}^{-1}$.
\end{itemize}
\end{proposition}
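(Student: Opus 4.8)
The plan is to exploit the multiplication laws of Proposition \ref{uhs-multiply}, in particular the ``braid-type'' relation $s\cdot u(x)\cdot s=u(-x^{-1})\cdot s\cdot u(-x)\cdot h(x)$, which is exactly the mechanism that lets us push an extra factor $u(a)\cdot s$ past the $s$ already present in the $BN$-pair form. Concretely, starting from the inductive hypothesis $[u(a)\cdot s]^{k}=u(a_{k})\cdot s\cdot u(x_{k})\cdot h(y_{k})$, I would compute
$$[u(a)\cdot s]^{k+1}=u(a_{k})\cdot s\cdot u(x_{k})\cdot h(y_{k})\cdot u(a)\cdot s.$$
First move $h(y_{k})$ to the right of $u(a)$ using $h(y)\cdot u(x)=u(xy^{2})\cdot h(y)$, so that the $h$-factor commutes out to the far right and the word becomes $u(a_{k})\cdot s\cdot u(x_{k}+a y_{k}^{2})\cdot h(y_{k})\cdot s$; then use $h(y)\cdot s=s\cdot h(y^{-1})$ to park $h(y_{k}^{-1})$ at the very end. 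The heart of the computation is the middle block $s\cdot u(x_{k}+a y_{k}^{2})\cdot s$, to which I apply the braid relation directly: it equals $u(-(x_{k}+a y_{k}^{2})^{-1})\cdot s\cdot u(-(x_{k}+a y_{k}^{2}))\cdot h(x_{k}+a y_{k}^{2})$. Reading off the leftmost $u$-exponent and using Conclusion \ref{coset-bn-unique} (the coset $[u(a)\cdot s]^{k+1}B$ is determined by, and determines, the value $a_{k+1}$) gives $a_{k+1}=a_{k}-(x_{k}+a y_{k}^{2})^{-1}$.

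To finish I must identify $x_{k}+ay_{k}^{2}$ with $a_{k}-a$ (so that $a_{k+1}=a_{k}-(a_{k}-a)^{-1}=a-(a-a_{k})^{-1}$, after rewriting; note $(a_{k}-a)^{-1}=-(a-a_{k})^{-1}$, and a sign check against the characteristic-$2$ case where signs disappear will be needed to pin down the exact form $a-a_{k}^{-1}$ versus $a-(a-a_{k})^{-1}$). The cleanest route is to set up a parallel recursion for the triple $(x_{k},y_{k})$ coming out of the same computation — the braid relation hands us $x_{k+1}=-(x_{k}+ay_{k}^{2})$ and $y_{k+1}=y_{k}^{-1}\cdot(x_{k}+ay_{k}^{2})$ — and then prove by induction the auxiliary identity $x_{k}+a y_{k}^{2}=a_{k}-a$ (equivalently, a closed relation tying $x_{k}$, $y_{k}$, and $a_{k}$). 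The base case $k=1$ is immediate from $u(a)\cdot s=u(a)\cdot s\cdot u(0)\cdot h(1)$, giving $a_{1}=a$, $x_{1}=0$, $y_{1}=1$, and the inductive step is a short substitution using the recursions just derived. An alternative, and perhaps slicker, verification uses the $SL_{2}$ matrix $\hat u(a)\hat s=\begin{pmatrix}-a&1\\-1&0\end{pmatrix}$: writing $\begin{pmatrix}-a&1\\-1&0\end{pmatrix}^{k}=\begin{pmatrix}p_{k}&q_{k}\\r_{k}&s_{k}\end{pmatrix}$, one checks $a_{k}$ is the ratio $-p_{k}/r_{k}$ (top-left over bottom-left, read off from $\hat u(a_{k})\hat s\hat u(x_{k})\hat h(y_{k})$), and the matrix recurrence $\begin{pmatrix}p_{k+1}&\cdot\\r_{k+1}&\cdot\end{pmatrix}=\begin{pmatrix}-a&1\\-1&0\end{pmatrix}\begin{pmatrix}p_{k}&\cdot\\r_{k}&\cdot\end{pmatrix}$ gives $p_{k+1}=-ap_{k}+r_{k}$, $r_{k+1}=-p_{k}$, whence $a_{k+1}=-p_{k+1}/r_{k+1}=(ap_{k}-r_{k})/p_{k}=a-r_{k}/p_{k}=a-a_{k}^{-1}$.

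For the $b$-statements the argument is the same machine applied to $[u(a)\cdot s]^{k}\cdot[u(b)\cdot s\cdot u(-b)]$. For $b_{\ell+1}=a-b_{\ell}^{-1}$ one observes that $[u(a)\cdot s]^{\ell+1}\cdot[u(b)\cdot s\cdot u(-b)]=(u(a)\cdot s)\cdot\big([u(a)\cdot s]^{\ell}\cdot[u(b)\cdot s\cdot u(-b)]\big)$, so multiplying the $BN$-form $u(b_{\ell})\cdot s\cdot u(x'_{\ell})\cdot h(y'_{\ell})$ on the \emph{left} by $u(a)\cdot s$ and renormalizing yields a recursion of exactly the same shape as for $a_{k}$ — here it is cleanest to use the matrix form, left-multiplying by $\hat u(a)\hat s$, which gives the identical $p,r$ recurrence and hence $b_{\ell+1}=a-b_{\ell}^{-1}$. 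Finally $b=b_{0}=a-b_{t-1}^{-1}$ follows because $[u(a)\cdot s]^{t}=1$ in $PSL_{2}(q)$ (Proposition \ref{coset-order}), so $[u(a)\cdot s]^{0}\cdot[u(b)\cdot s\cdot u(-b)]=[u(a)\cdot s]^{t}\cdot[u(b)\cdot s\cdot u(-b)]=(u(a)\cdot s)\cdot\big([u(a)\cdot s]^{t-1}\cdot[u(b)\cdot s\cdot u(-b)]\big)$, and the same left-multiplication recursion gives $b_{0}=a-b_{t-1}^{-1}$; it remains only to check the base value $b_{0}=b$, which is immediate since $u(b)\cdot s\cdot u(-b)=u(b)\cdot s\cdot u(-b)\cdot h(1)$ is already in $BN$-form with leading $u$-exponent $b$. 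The main obstacle is bookkeeping: keeping the signs, the $h(y)^{2}$ twists, and the $h$-commutations straight through the braid relation — in odd characteristic the signs genuinely matter, and the honest way to avoid error is to carry the full triple $(a_{k},x_{k},y_{k})$ (or the $2\times2$ matrix) through the induction rather than $a_{k}$ alone.
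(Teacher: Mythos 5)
Your proposal contains the right key idea, but the route you foreground for the $a_k$-recursion has a concrete error. The paper's proof multiplies on the \emph{left}: $[u(a)\cdot s]^{k+1}=u(a)\cdot s\cdot[u(a)\cdot s]^{k}=u(a)\cdot s\cdot u(a_k)\cdot s\cdot u(x_k)\cdot h(y_k)$, so the braid relation is applied to $s\cdot u(a_k)\cdot s$ and the leading coefficient $a-a_k^{-1}$ drops out immediately, with no need to know anything about $x_k$ or $y_k$. You instead multiply on the \emph{right}, which forces you to apply the braid relation to $s\cdot u(x_k+ay_k^2)\cdot s$ and then prove an auxiliary identity tying $x_k+ay_k^2$ to $a_k$. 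The identity you propose, $x_k+ay_k^2=a_k-a$, is false: at $k=1$ one has $x_1=0$, $y_1=1$, $a_1=a$, so $x_1+ay_1^2=a$ while $a_1-a=0$. What your computation actually yields is $a_{k+1}=a_k-(x_k+ay_k^2)^{-1}$, so the identity you need is $x_k+ay_k^2=(a_k-a+a_k^{-1})^{-1}$; unwinding it via $x_k=-\alpha_{k-2}(a)\alpha_{k-1}(a)$, $y_k=\alpha_{k-1}(a)$ shows it is equivalent to $\alpha_k^2(a)-\alpha_{k+1}(a)\alpha_{k-1}(a)=1$, which is a nontrivial Dickson-polynomial identity the paper only establishes later (Proposition \ref{alpha-property}), itself downstream of the present proposition. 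So as written the primary route is either circular or requires a separate induction on the pair $(x_k,y_k)$ that you have not carried out.

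Your two fallback arguments are essentially the paper's proof and are fine. For the $b_\ell$'s you do multiply on the left, which is exactly the paper's argument, and the wrap-around $b_0=a-b_{t-1}^{-1}$ via $[u(a)\cdot s]^{t}=1$ is a correct (and slightly more explicit) treatment than the paper gives. The matrix alternative is also sound in substance, though note the sign: from $\hat u(a_k)\hat s\hat u(x_k)\hat h(y_k)$ the first column is $(-a_ky_k,\,-y_k)^{T}$, so $a_k=p_k/r_k$, not $-p_k/r_k$; your final line happens to come out right only because a second sign slip ($r_{k+1}=-p_k$ dropped to $p_k$ in the denominator) cancels the first. The fix for the whole proof is simply to promote your left-multiplication computation from the $b$-case to the $a$-case as well.
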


\begin{proof}
By Proposition \ref{ab-unique},
$$[u(a)\cdot s]^{k+1}=u(a_{k+1})\cdot s\cdot u(x_{k+1})\cdot h(y_{k+1}),$$ for some $x_{k+1}\in \mathbb{F}_{q}$, $y_{k+1}\in \mathbb{F}_{q}^{*}$.
On the other hand,
$$[u(a)\cdot s]^{k+1}=u(a)\cdot s\cdot [u(a)\cdot s]^{k}=u(a)\cdot s\cdot u(a_{k})\cdot s\cdot u(x_{k})\cdot h(y_{k}),$$
for some $x_k\in \mathbb{F}_{q}$, $y_k\in \mathbb{F}_{q}^{*}$. Now, by Proposition \ref{uhs-multiply},
$$s\cdot u\left(a_{k}\right)\cdot s=u\left(-a_{k}^{-1}\right)\cdot s\cdot u\left(-a_{k}\right)\cdot h\left(a_{k}\right).$$
 Therefore,
 $$[u\left(a\right)\cdot s]^{k+1}=u\left(a\right)\cdot u\left(-a_{k}^{-1}\right)\cdot s\cdot u\left(-a_{k}\right)\cdot h\left(a_{k}\right).$$
 Hence,
 $$[u\left(a\right)\cdot s]^{k+1}=u\left(a-a_{k}^{-1}\right)\cdot s\cdot u\left(-a_{k}\right)\cdot h\left(a_{k}\right),$$ for some $x_k\in \mathbb{F}_{q}$, $y_k\in \mathbb{F}_{q}^{*}$.
 Thus,
 $$a_{k+1}=a-a_{k}^{-1}.$$
 Similarly, by Proposition \ref{ab-unique},
 $$[u(a)\cdot s]^{l+1}\cdot [u(b)\cdot s\cdot u(-b)]=u(b_{\ell+1})\cdot s\cdot u({x'}_{\ell+1})\cdot h({y'}_{\ell+1}).$$
 On the other hand,
 $$[u(a)\cdot s]^{\ell+1}\cdot [u(b)\cdot s\cdot u(-b)]=u(a)\cdot s\cdot [u(a)\cdot s]^{\ell}\cdot [u(b)\cdot s\cdot u(-b)]=u(a)\cdot s\cdot u(b_{\ell})\cdot s\cdot u({x'}_{k})\cdot h({y'}_{k}).$$
 Now, by Proposition \ref{uhs-multiply},
 $$s\cdot u(b_{\ell})\cdot s=u(-b_{\ell}^{-1})\cdot s\cdot u(-b_{\ell})\cdot h(b_{\ell}).$$
  Hence,
  $$[u(a)\cdot s]^{\ell+1}\cdot [u(b)\cdot s\cdot u(-b)]=u(a-b_{\ell}^{-1})\cdot s\cdot u({x'}_{\ell+1})\cdot h({y'}_{\ell+1}).$$
  Thus,
  $$b_{\ell+1}=a-b_{\ell}^{-1}.$$
\end{proof}

The next theorem demonstrates interesting connections of the sequences $a_{k}$ and $b_{\ell}$ for $1\leq k\leq t-1$ and $0\leq \ell\leq t-1$, to $\alpha_{r}(a)$ (the Dickson polynomial of the second kind on variable $a$ over $\mathbb{F}_{q}$) and $\beta_{r}(a,b)$ (Where by Definition \ref{beta}, $\beta_{r}(a,b)$ is a linear combinations of $\alpha_{r}(a)$ and $\alpha_{r-1}(a)$, and in case $b=\frac{a}{2}$, by Proposition \ref{chebyshev}, $\beta_{r}(a,b)$ is the Chebyshev polynomial of the first kind on variable $b$ over $\mathbb{F}_{q}$).

\begin{theorem}\label{ab-dickson}
Let $G=PSL_{2}(q)$. For every $1\leq k\leq t-1$,  $0\leq \ell\leq t-1$, and $-1\leq r\leq t-1$, let $a_{k}$, $b_{\ell}$, $\alpha_{r}(a)$, and $\beta_{r}(a,b)$ be elements of $\mathbb{F}_{q}$ as it is defined in Proposition \ref{ab-unique} and in Definitions \ref{alpha}, \ref{beta}. Then the following holds:
\begin{itemize}
\item $a_{k}=\frac{\alpha_{k}(a)}{\alpha_{k-1}(a)}$;
\item $b_{\ell}=\frac{\beta_{\ell}(a,b)}{\beta_{\ell-1}(a,b)}$;
\item $\alpha_{k}(a)=\prod_{i=1}^{k}a_{i}$, where $1\leq k\leq t-1$;
\item $\beta_{\ell}(a,b)=\prod_{i=0}^{\ell}b_{i}$, where $0\leq \ell\leq t-1$.
\end{itemize}
\end{theorem}

\begin{proof}
The proof is by induction on $k$. By Definition \ref{beta},
$$\beta_{-1}(a,b)=1$$
  and
  $$\beta_{k}(a,b)=b\cdot\alpha_{k}(a)-\alpha_{k-1}(a),$$
  for $0\leq k\leq t-1$. Thus
  $$\beta_{0}(a,b)=b\cdot \alpha_{0}(a)-\alpha_{-1}(a)=b\cdot 1-0=b.$$
  Now, by Definition \ref{alpha},
  $$a_{1}=a=\alpha_{1}(a)\cdot\alpha_{0}^{-1}(a)$$
  and by Proposition \ref{ab-recursive},
  $$b_{0}=b=\beta_{0}(a,b)\cdot\beta_{-1}^{-1}(a,b)$$
  and
  $$b_{1}=a-b^{-1}=\frac{ab-1}{b}=\frac{b\cdot \alpha_{1}(a)-\alpha_{0}(a)}{b\cdot \alpha_{0}(a)-\alpha_{-1}(a)}=\frac{\beta_{1}(a,b)}{\beta_{0}^{-1}(a,b)}.$$
  Assume
  $$a_{\ell}=\frac{\alpha_{\ell}(a)}{\alpha_{\ell-1}^{-1}(a)}$$
  and
  $$b_{\ell}=\frac{b\cdot\alpha_{\ell}(a)-\alpha_{\ell-1}(a)}{b\cdot\alpha_{\ell-1}(a)-\alpha_{\ell-2}(a)}$$
  for every $\ell\leq k$. By Proposition \ref{ab-recursive},
  $$a_{k+1}=a-a_{k}^{-1}$$
  and
  $$b_{k+1}=a-b_{k}^{-1}.$$
  Then, by the induction hypothesis,
  $$a_{k+1}=a-\frac{\alpha_{k-1}(a)}{\alpha_{k}^{-1}(a)}$$
  and
  $$b_{k+1}=a-\frac{b\cdot\alpha_{k-1}(a)-\alpha_{k-2}(a)}{b\cdot\alpha_{k}(a)-\alpha_{k-1}(a)}.$$
  Thus,
  $$a_{k+1}=\frac{a\cdot\alpha_{k}(a)-\alpha_{k-1}(a)}{\alpha_{k}(a)}$$
  and
  $$b_{k+1}=\frac{b\cdot[a\cdot\alpha_{k}(a)-\alpha_{k-1}(a)]-[a\cdot\alpha_{k-1}(a)-\alpha_{k-2}(a)]}{b\cdot\alpha_{k}(a)-\alpha_{k-1}(a)}.$$ Therefore,
  $$a_{k+1}=\frac{\alpha_{k+1}(a)}{\alpha_{k}(a)}$$
  and
  $$b_{k+1}=\frac{b\cdot\alpha_{k+1}(a)-\alpha_{k}(a)}{b\cdot\alpha_{k}(a)-\alpha_{k-1}(a)}.$$
  Thus, the proposition holds for every $1\leq k\leq t-1$.
\end{proof}
\\

The next three propositions shows some interesting relations which comes from the definitions of  $a_{k}$, $b_{\ell}$, where $1\leq k\leq t-1$, $0\leq \ell\leq t-1$.

\begin{proposition}\label{prod-sum-a}
Let $G=PSL_{2}(q)$. For every $1\leq k\leq t-1$, let $a_{k}$ be elements of $\mathbb{F}_{q}$ as it is defined in Proposition \ref{ab-unique}. Then, the following holds:
\begin{itemize}
\item  $a_{k}\cdot a_{t-k-1}=1$ for $1\leq k\leq t-2$;
\item $a_{k}+a_{t-k}=a$.
\end{itemize}
\end{proposition}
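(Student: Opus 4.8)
The plan is to reduce both identities to the closed-form description $a_k=\alpha_k(a)\cdot\alpha_{k-1}^{-1}(a)$ from Proposition \ref{ab-algorithm}, so that everything becomes a statement about the Dickson sequence $\alpha_r(a)$. The first identity $a_k\cdot a_{t-k-1}=1$ is equivalent, after substituting, to $\alpha_k(a)\cdot\alpha_{k-1}^{-1}(a)\cdot\alpha_{t-k-1}(a)\cdot\alpha_{t-k-2}^{-1}(a)=1$, i.e. to the symmetry $\alpha_k(a)\cdot\alpha_{t-k-1}(a)=\alpha_{k-1}(a)\cdot\alpha_{t-k-2}(a)$; telescoping this over $k$ would in turn follow from a single reflection identity of the form $\alpha_{t-1-j}(a)=c\cdot\alpha_{j}(a)$ for a constant $c$ independent of $j$ (one expects $c=\pm1$, coming from $\alpha_{t-1}(a)=\prod a_i$ together with Proposition \ref{t-1}, which gives $a_{t-1}=0$ hence $\alpha_{t-1}(a)=0$ — so actually the right statement at the endpoint is $\alpha_{t-1}(a)=0$, and the reflection should be phrased on the index range $-1\le j\le t-1$). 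Concretely I would prove $\alpha_{t-1-j}(a)=-\alpha_{j-1}(a)$ (equivalently $\alpha_{t-2-j}(a)=-\alpha_j(a)$) for all admissible $j$ by using the root description: over $\mathbb{F}_{q^2}$ one has the standard formula $\alpha_k(a)=\dfrac{\omega^{k+1}-\omega^{-(k+1)}}{\omega-\omega^{-1}}$, where $\omega$ has order $q+1$, so $\omega^{t\cdot\gcd(2,q+1)}=\omega^{q+1}=1$; substituting $k=t-1-j$ and using $\omega^{t}=\pm1$ (namely $\omega^t=-1$ when $q$ is odd since then $t=\tfrac{q+1}{2}$ and $\omega^{(q+1)/2}=-1$ as in the proof of Proposition \ref{coset-order}, and $\omega^t=1=-1$ when $q$ is even) collapses the numerator to $\mp(\omega^{j}-\omega^{-j})$, giving the reflection up to the sign that I will track carefully. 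Then $a_k\cdot a_{t-k-1}=\dfrac{\alpha_k(a)\alpha_{t-k-1}(a)}{\alpha_{k-1}(a)\alpha_{t-k-2}(a)}$ and the reflection identity makes numerator and denominator equal.

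For the second identity $a_k+a_{t-k}=a$, I would again pass to $\alpha$'s: the claim becomes
$$\frac{\alpha_k(a)}{\alpha_{k-1}(a)}+\frac{\alpha_{t-k}(a)}{\alpha_{t-k-1}(a)}=a,$$
equivalently $\alpha_k(a)\alpha_{t-k-1}(a)+\alpha_{t-k}(a)\alpha_{k-1}(a)=a\cdot\alpha_{k-1}(a)\alpha_{t-k-1}(a)$. Using the recursion $\alpha_{t-k}(a)=a\cdot\alpha_{t-k-1}(a)-\alpha_{t-k-2}(a)$ from Proposition \ref{alpha-recursive} on the second term, the right-hand side cancels and we are left with $\alpha_k(a)\alpha_{t-k-1}(a)=\alpha_{t-k-2}(a)\alpha_{k-1}(a)$, which is exactly the symmetry already established for the first part (shifted index). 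So the two statements are really one identity, and the second follows from the first together with the three-term recursion. Alternatively, and perhaps more cleanly for the write-up, I can derive $a_k+a_{t-k}=a$ directly from the group side: since $[u(a)\cdot s]^t=1$ (Proposition \ref{coset-order}), we have $[u(a)\cdot s]^{t-k}=\big([u(a)\cdot s]^{k}\big)^{-1}$; computing the $BN$-presentation of the inverse of $u(a_k)\cdot s\cdot u(x_k)\cdot h(y_k)$ via Proposition \ref{uhs-multiply} and reading off the leading $u$-coordinate should yield a relation between $a_{t-k}$, $a_k$ and $a$ of the stated form, and the relation $a_k\cdot a_{t-k-1}=1$ can be extracted from the same inversion computation (it reflects that the $h$-part of $[u(a)s]^k$ pairs the two ends).

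The main obstacle I anticipate is bookkeeping of signs and of the characteristic-$2$ versus odd-$q$ cases: in characteristic $2$ one has $t=q+1$ and $\omega^t=1$, while for odd $q$ one has $t=\tfrac{q+1}{2}$ and $\omega^t=-1$, and the Dickson/Chebyshev normalization carries a sign that must be reconciled with $\alpha_{-1}(a)=0$, $\alpha_0(a)=1$; getting the endpoint identity $\alpha_{t-1}(a)=0$ consistent with Proposition \ref{t-1} is the sanity check that pins down the sign. A second, smaller subtlety is that the $\alpha$-closed-form involving $\omega$ lives in $\mathbb{F}_{q^2}$, so I should either argue that the resulting polynomial identity in $a$ holds in $\mathbb{F}_{q^2}$ and hence, being an identity among elements of $\mathbb{F}_q$, holds in $\mathbb{F}_q$, or phrase the reflection purely via the recursion $\alpha_{k+1}=a\alpha_k-\alpha_{k-1}$ and a downward induction from the endpoints $\alpha_{t-1}(a)=0$, $\alpha_{t-2}(a)=\pm1$ (which follows from Proposition \ref{ab-algorithm} since $\alpha_{t-2}(a)=\prod_{i=1}^{t-2}a_i$ and $\prod_{i=1}^{t-1}a_i=\alpha_{t-1}(a)=0$ forces care — better to get $\alpha_{t-2}(a)$ from $a_{t-1}=\alpha_{t-1}(a)/\alpha_{t-2}(a)=0$ plus $\alpha_{t-2}(a)\neq0$). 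I would present the inductive version in the paper, as it avoids leaving $\mathbb{F}_q$, and relegate the $\omega$-computation to a remark.
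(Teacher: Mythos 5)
Your proposal is correct, but its primary route is genuinely different from the paper's. The paper proves $a_{k}\cdot a_{t-k-1}=1$ by induction on $k$ entirely on the group side: it writes $[u(a)\cdot s]^{t-k-1}=s\cdot u(-a)\cdot [u(a)\cdot s]^{t-k}$, applies the multiplication laws of Proposition \ref{uhs-multiply} together with $a_{k}=a-a_{k-1}^{-1}$, and reads off the leading $u$-coordinate --- essentially the ``alternative'' inversion computation you mention only in passing; the second identity is then deduced, exactly as in your reduction, from the first one plus the recursion and $a_{t-1}=0$. Your main route instead passes to $a_{k}=\alpha_{k}(a)\cdot\alpha_{k-1}^{-1}(a)$ and rests on the reflection symmetry $\alpha_{t-2-j}(a)=c\cdot\alpha_{j}(a)$; this is sound and non-circular, since Propositions \ref{ab-algorithm} and \ref{alpha-recursive} precede the present statement and do not use it, and the constant $c$ cancels in both ratios, so the sign bookkeeping you worry about is harmless. (Two small corrections: since $\omega+\omega^{-1}=-a$, the closed form is $\alpha_{k}(a)=\bigl(\mu^{k+1}-\mu^{-(k+1)}\bigr)\bigl(\mu-\mu^{-1}\bigr)^{-1}$ with $\mu=-\omega$, which gives $c=(-1)^{t}$; and in the induction variant you need not determine $\alpha_{t-2}(a)=\pm1$ in advance --- set $c:=\alpha_{t-2}(a)$, note that $j\mapsto\alpha_{t-2-j}(a)$ satisfies the same palindromic three-term recursion with initial values $0=c\cdot\alpha_{-1}(a)$ and $c=c\cdot\alpha_{0}(a)$, conclude $\alpha_{t-2-j}(a)=c\cdot\alpha_{j}(a)$, and only then obtain $c^{2}=1$ at $j=t-2$.) What your route buys is that the reflection identity is precisely the statement $\alpha_{t-k-2}(a)=\pm\alpha_{k}(a)$ that the paper proves only later, in Proposition \ref{alpha-property}, \emph{using} the present proposition; organizing the argument your way reverses that dependency and yields both results at once, at the cost of leaning on the Dickson-polynomial machinery rather than staying inside the group.
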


\begin{proof}
The proof of the first part is by induction on $k$. First, we show the proposition for $k=1$. By Proposition \ref{ab-unique},
$$[u(a)\cdot s]^{t-2}=u(a_{t-2})\cdot s\cdot u(x_{t-2})\cdot h(y_{t-2}).$$
We also have
$$[u(a)\cdot s]^{t-2}=[u(a)\cdot s]^{-2}=s\cdot u(-a)\cdot s\cdot u(-a).$$
Then, by Proposition \ref{uhs-multiply},
$$s\cdot u(-a)\cdot s=u(a^{-1})\cdot s\cdot u(a)\cdot h(-a).$$
Therefore,
$$[u(a)\cdot s]^{t-2}=u(a^{-1})\cdot s\cdot u(a)\cdot h(a)\cdot u(-a).$$
Thus,
$$a_{t-2}=a^{-1}=a_{1}^{-1}.$$
Now, assume by induction
$$a_{t-r}=a_{r-1}^{-1}$$
for every $2\leq r\leq k$, and we prove the correctness of the proposition for $r=k+1$. By Proposition \ref{ab-unique},
$$[u(a)\cdot s]^{t-k-1}=u(a_{t-k-1})\cdot s\cdot u(x_{t-k-1})\cdot h(y_{t-k-1}).$$
We also have
\begin{align*}
[u(a)\cdot s]^{t-k-1}=[u(a)\cdot s]^{-(k+1)} &= s\cdot u(-a)\cdot [u(a)\cdot s]^{t-k} \\ &= s\cdot u(-a)\cdot u(a_{t-k})\cdot s\cdot u(x_{t-k})\cdot h(y_{t-k}).
\end{align*}
Since
$$a_{t-k}=a_{k-1}^{-1}$$
by our induction hypothesis, we have
$$[u(a)\cdot s]^{t-k-1}= s\cdot u(-a)\cdot u(a_{k-1}^{-1})\cdot s\cdot u(x_{t-k})\cdot h(y_{t-k}).$$
Then, by Proposition \ref{uhs-multiply},
$$s\cdot u(-a+a_{k-1}^{-1})\cdot s=u([a-a_{k-1}^{-1}]^{-1})\cdot s\cdot u(a-a_{k-1}^{-1})\cdot h(-a+a_{k-1}^{-1}).$$
Now, by using Proposition \ref{ab-recursive},
$$a-a_{k-1}^{-1}=a_{k}.$$
 Therefore,
 $$[u(a)\cdot s]^{t-k-1}= u({a_{k}}^{-1})\cdot s\cdot u(x_{t-k-1})\cdot h(y_{t-k-1}).$$
 Hence,
 $$a_{t-k-1}=a_{k}^{-1}$$
 for every $1\leq k\leq t-2$. \\
Now, we turn to the proof of the second part of the proposition. By Proposition \ref{t-1}, $a_{t-1}=0$. Thus,
$$a_1+a_{t-1}=a+0=a.$$
Now, for $2\leq k\leq t-1$,  we have
$$a_{k}=a-a_{k-1}^{-1}$$
by using Proposition \ref{ab-recursive}. Then by the first part of the proposition,
$$a_{k-1}^{-1}=a_{t-k}$$
for every $2\leq k\leq t-1$. Thus, we get
$$a_{k}+a_{t-k}=a$$
for every $1\leq k\leq t-1$.
\end{proof}

\begin{corollary}\label{a-middle}
Let $G=PSL_{2}(q)$. For every $1\leq k\leq t-1$, let $a_{k}$ be elements of $\mathbb{F}_{q}$ as it is defined in Proposition \ref{ab-unique}. Then, the following holds:
\begin{itemize}
\item In case the characteristic of $\mathbb{F}_{q}$ equals $2$: $$a_{\frac{q}{2}}=1;$$
\item In case the characteristic of $\mathbb{F}_{q}$ does not equal $2$, and $4 | q-1$: $$a_{\frac{q-1}{4}}=1$$ or $$a_{\frac{q-1}{4}}=-1;$$
\item In case the characteristic of $\mathbb{F}_{q}$ does not equal $2$, and $4 | q+1$: $$a_{\frac{q+1}{4}}=\frac{a}{2}.$$
\end{itemize}
\end{corollary}

\begin{proof}
First, assume the characteristic of $\mathbb{F}_{q}$ equals $2$. Then, $t=q+1$. Thus, for $k=\frac{q}{2}$ we have
$$t-k-1=q+1-\frac{q}{2}-1=\frac{q}{2}.$$
Therefore, by Proposition \ref{prod-sum-a},
$$(a_{\frac{q}{2}})^{2}=1.$$
Hence,
$$a_{\frac{q}{2}}=1$$
in case the characteristic of $\mathbb{F}_{q}$ equals $2$. \\
Now, assume the characteristic of $\mathbb{F}_{q}$ does not equal $2$. Then, $q$ is odd, and either $4 | q-1$ or $4 | q+1$. First, assume $4 | q-1$. In this case, $t=\frac{q+1}{2}$, and for $k=\frac{q-1}{4}$ we have
$$t-k-1=\frac{q+1}{2}-\frac{q-1}{4}-1=\frac{q-1}{4}.$$
Therefore, by Proposition \ref{prod-sum-a},
$$(a_{\frac{q-1}{4}})^{2}=1.$$
Hence, either
$$a_{\frac{q-1}{4}}=1$$
or
$$a_{\frac{q-1}{4}}=-1$$
in case $q$ is odd, and $4 | q-1$. \\
Now, assume $q$ is odd, and $4 | q+1$. In this case, $t=\frac{q+1}{2}$, and for $k=\frac{q+1}{4}$ we have
$$t-k=\frac{q+1}{2}-\frac{q+1}{4}=\frac{q+1}{4}.$$
Therefore, by Proposition \ref{prod-sum-a}
$$2\cdot a_{\frac{q+1}{4}}=a.$$
Hence,
$$a_{\frac{q+1}{4}}=\frac{a}{2}$$
in case $q$ is odd and $4 | q+1$.
\end{proof}

\begin{proposition}\label{akl}
Let $G=PSL_{2}(q)$. Then, for every $1\leq \ell<k\leq t-1$, the following holds:
$$a_{k}=a_{\ell}-[(a_{1}\cdot a_{2}\cdots a_{\ell-1})\cdot (a_{k-\ell}\cdots a_{k-3}\cdot a_{k-2})\cdot a_{k-1}]^{-1}.$$
In particular:
$$a_{k}=a_{k-1}-[a_{1}^{2}\cdot a_{2}^{2}\cdots a_{k-2}^{2}\cdot a_{k-1}]^{-1}.$$
\end{proposition}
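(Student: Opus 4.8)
The plan is to reduce the statement to a single polynomial identity for the Dickson polynomials of the second kind, using the closed formulas of Proposition~\ref{ab-algorithm}, namely $a_k=\alpha_k(a)\,\alpha_{k-1}^{-1}(a)$ and $\alpha_m(a)=\prod_{i=1}^{m}a_i$ for $1\le m\le t-1$, with the empty product $\alpha_0(a)=1$. First I would record the nonvanishing facts that make the inversions legitimate: by Proposition~\ref{prod-sum-a} one has $a_i\,a_{t-1-i}=1$ for $1\le i\le t-2$, so each such $a_i$ is invertible and hence $\alpha_m(a)\ne0$ for $0\le m\le t-2$; since $1\le\ell<k\le t-1$, the indices $\ell-1$, $k-1$, $k-\ell-1$ all lie in $\{0,\dots,t-2\}$, so $\alpha_{\ell-1}(a)$, $\alpha_{k-1}(a)$, $\alpha_{k-\ell-1}(a)$ are nonzero, and the bracket on the right-hand side of the claimed formula, which equals $(a_1\cdots a_{\ell-1})(a_{k-\ell}\cdots a_{k-1})$ (because $(a_{k-\ell}\cdots a_{k-2})\,a_{k-1}=a_{k-\ell}\cdots a_{k-1}$), is a product of nonzero field elements. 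Telescoping $a_i=\alpha_i(a)\,\alpha_{i-1}^{-1}(a)$ gives $a_1\cdots a_{\ell-1}=\alpha_{\ell-1}(a)$ and $a_{k-\ell}\cdots a_{k-1}=\alpha_{k-1}(a)\,\alpha_{k-\ell-1}^{-1}(a)$; substituting these, together with $a_\ell=\alpha_\ell(a)\,\alpha_{\ell-1}^{-1}(a)$ and $a_k=\alpha_k(a)\,\alpha_{k-1}^{-1}(a)$, into the asserted equality and clearing the (nonzero) denominators, the proposition becomes equivalent to the single identity
$$\alpha_{k-1}(a)\,\alpha_{\ell}(a)-\alpha_{k}(a)\,\alpha_{\ell-1}(a)=\alpha_{k-\ell-1}(a),\qquad 1\le\ell<k.$$

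This last identity is the heart of the matter, and I would prove it by induction on $k$, proving the statement ``$\alpha_{k-1}(a)\alpha_{\ell}(a)-\alpha_{k}(a)\alpha_{\ell-1}(a)=\alpha_{k-\ell-1}(a)$ for all $1\le\ell\le k-1$'' (vacuous for $k=1$). The inductive step uses only the three-term recursion $\alpha_{j+1}(a)=a\,\alpha_j(a)-\alpha_{j-1}(a)$ of Proposition~\ref{alpha-recursive} together with $\alpha_{-1}(a)=0$: substituting $\alpha_{k+1}(a)=a\alpha_k(a)-\alpha_{k-1}(a)$ and then $\alpha_\ell(a)=a\alpha_{\ell-1}(a)-\alpha_{\ell-2}(a)$ turns $\alpha_{k}(a)\alpha_{\ell}(a)-\alpha_{k+1}(a)\alpha_{\ell-1}(a)$ into $\alpha_{k-1}(a)\alpha_{\ell-1}(a)-\alpha_{k}(a)\alpha_{\ell-2}(a)$, which for $\ell\ge2$ is the index-$(\ell-1)$ case of the inductive hypothesis, hence equals $\alpha_{k-\ell}(a)$, while for $\ell=1$ it equals $\alpha_{k-1}(a)=\alpha_{k-\ell}(a)$ outright. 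Equivalently, the identity can be read off the standard closed form $\alpha_r(a)=E_r(a,1)=\frac{\theta^{\,r+1}-\theta^{-r-1}}{\theta-\theta^{-1}}$, where $\theta$ is an element of $\mathbb{F}_{q^2}$ with $\theta+\theta^{-1}=a$ and $\theta\ne\theta^{-1}$ (such a $\theta$ exists because $\lambda^2+a\lambda+1$ is irreducible, hence separable, so $a^2\ne4$) — for odd $q$ this is the Chebyshev form of Remark~\ref{chebyshev-alpha} — after which the identity is a one-line Laurent-polynomial computation in $\theta$ using $\theta^2-1=\theta(\theta-\theta^{-1})$.

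Finally, the ``in particular'' clause is the case $\ell=k-1$: then $a_1\cdots a_{\ell-1}=a_1\cdots a_{k-2}$ and $a_{k-\ell}\cdots a_{k-1}=a_1\cdots a_{k-1}$, so the bracket is $a_1^{2}a_2^{2}\cdots a_{k-2}^{2}\,a_{k-1}$ and the general formula specializes exactly to the stated one; the case $\ell=1$ is a consistency check, recovering Proposition~\ref{ab-recursive}. As a variant that stays inside the $BN$-pair formalism and does not invoke Proposition~\ref{ab-algorithm}, one can instead start from $[u(a)\cdot s]^{k}=[u(a)\cdot s]^{\ell}\cdot[u(a)\cdot s]^{k-\ell}$, insert the normal forms $[u(a)\cdot s]^{m}=u(a_m)\cdot s\cdot u(x_m)\cdot h(y_m)$, and push back to normal form via Proposition~\ref{uhs-multiply} exactly as in the proofs of Propositions~\ref{ab-recursive} and \ref{prod-sum-a}; this yields $a_k=a_\ell-(x_\ell+a_{k-\ell}\,y_\ell^{2})^{-1}$, and one then supplies the auxiliary formulas $x_m=-\alpha_{m-1}(a)\,\alpha_{m-2}(a)$ and $y_m^{2}=\alpha_{m-1}(a)^{2}$ (a short induction from $x_1=0$, $y_1=1$, $x_{m+1}=a_m^{2}x_m-a_m$, $y_{m+1}=a_m y_m$) before invoking the same Dickson identity. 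The step I expect to require the most care — though it is genuinely routine — is the index-and-nonvanishing bookkeeping of the first paragraph together with the verification of the Dickson identity; this is precisely where the hypotheses $\ell<k\le t-1$ and Propositions~\ref{t-1} and \ref{prod-sum-a} enter.
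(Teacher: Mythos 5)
Your proof is correct, but it runs in the opposite logical direction from the paper's. The paper proves Proposition~\ref{akl} by a direct induction on $\ell$ working entirely at the level of the sequence $a_k$: it writes $a=a_\ell+a_{\ell-1}^{-1}$ and $a_k=a-a_{k-1}^{-1}$ (Proposition~\ref{ab-recursive}), combines the two, and feeds the difference $a_{k-1}-a_{\ell-1}$ back into the induction hypothesis. You instead pass through the closed forms $a_k=\alpha_k(a)\alpha_{k-1}^{-1}(a)$ and $\alpha_m(a)=\prod_{i=1}^m a_i$ of Proposition~\ref{ab-algorithm}, reduce the claim to the Wronskian-type identity $\alpha_\ell(a)\alpha_{k-1}(a)-\alpha_{\ell-1}(a)\alpha_k(a)=\alpha_{k-\ell-1}(a)$, and prove that identity by induction on $k$ from the three-term recursion of Proposition~\ref{alpha-recursive}. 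Note that this identity is exactly one of the items of the paper's later Proposition~\ref{alpha-property}, where it is \emph{deduced from} Proposition~\ref{akl}; you have reversed the dependency, which is legitimate since your induction uses only $\alpha_{-1}(a)=0$ and $\alpha_{j+1}(a)=a\,\alpha_j(a)-\alpha_{j-1}(a)$ and nowhere invokes \ref{akl}. Your reduction, telescoping, specialization $\ell=k-1$, and the inductive step (including the separate base case $\ell=1$ via $\alpha_{-1}=0$) all check out, and your invertibility bookkeeping ($a_i\neq0$ for $i\leq t-2$ via Proposition~\ref{prod-sum-a}, hence $\alpha_m(a)\neq0$ for $0\leq m\leq t-2$) is actually more careful than the paper's, which silently inverts these quantities. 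What each approach buys: the paper's argument is self-contained at the level of the $a_k$ and needs nothing about Dickson polynomials; yours exposes the underlying determinant identity for second-kind Dickson/Chebyshev polynomials explicitly and, as a by-product, gives an independent proof of the corresponding item of Proposition~\ref{alpha-property}, at the cost of importing Proposition~\ref{ab-algorithm}.
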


\begin{proof}
The proof is by induction on $\ell$. For $\ell=1$, we get
$$a_{k}=a_{1}-{a_{k-1}}^{-1},$$
which holds by Proposition \ref{ab-recursive}. Now, we prove the proposition for $\ell=2$. First, by Proposition \ref{ab-recursive},
$$a=a_{2}+a_{1}^{-1}.$$
Thus, by using again Proposition \ref{ab-recursive}, we have
$$a_{k}=a-a_{k-1}^{-1}=a_{2}+a_{1}^{-1}-a_{k-1}^{-1}=a_{2}-(a_{1}-a_{k-1})\cdot a_{1}^{-1}\cdot a_{k-1}^{-1}.$$
Since by Proposition \ref{ab-recursive},
$$a_{1}-a_{k-1}=a_{k-2}^{-1},$$
we finally get
$$a_{k}=a_{2}-a_{1}^{-1}\cdot a_{k-2}^{-1}\cdot a_{k-1}^{-1}.$$
Hence, the Proposition holds for $l=2$. Now, assume by induction, for every $r\leq \ell-1<k$:
$$a_{k}=a_{r}-[(a_{1}\cdot a_{2}\cdots a_{r-1})\cdot (a_{k-r}\cdots a_{k-3}\cdot a_{k-2})\cdot a_{k-1}]^{-1}.$$
and we prove it for $r=\ell$. Similarly to the case of $\ell=2$, by Proposition \ref{ab-recursive}, we get
$$a=a_{\ell}+a_{\ell-1}^{-1}.$$
Thus,
$$a_{k}=a-a_{k-1}^{-1}=a_{\ell}+a_{\ell-1}^{-1}-a_{k-1}^{-1}=a_{\ell}-[a_{\ell-1}-a_{k-1}]\cdot[a_{\ell-1}\cdot a_{k-1}]^{-1}$$
by using again Proposition \ref{ab-recursive}. Since
$$a_{\ell-1}-a_{k-1}=[(a_{1}\cdot a_{2}\cdots a_{\ell-2})\cdot (a_{k-\ell}\cdots a_{k-4}\cdot a_{k-3})\cdot a_{k-2}]^{-1}$$
by using our induction hypothesis, we finally get
$$a_{k}=a_{\ell}-[(a_{1}\cdot a_{2}\cdots a_{\ell-1})\cdot (a_{k-\ell}\cdots a_{k-3}\cdot a_{k-2})\cdot a_{k-1}]^{-1}.$$
By substitution of $\ell=k-1$, which is the largest possible $\ell<k$, we derive the following result:
$$a_{k}=a_{k-1}-[a_{1}^{2}\cdot a_{2}^{2}\cdots a_{k-2}^{2}\cdot a_{k-1}]^{-1}.$$
\end{proof}

\begin{proposition}\label{b-property}
Let $G=PSL_{2}(q)$ such that $q$ is odd. For every $1\leq k\leq t-1$  $0\leq \ell\leq t-1$, let $a_{k}$ and $b_{\ell}$ be elements of $\mathbb{F}_{q}$ as it is defined in Proposition \ref{ab-unique}. Then, the following holds:
\begin{itemize}
\item $b_{0}=b$;
\item $b_{\ell}=\frac{a_{\ell}\cdot b-1}{b-a_{\ell-1}^{-1}}=\frac{a_{\ell}\cdot b-1}{b-a_{t-\ell}}=\frac{1-a_{\ell}\cdot b}{a-b-a_{\ell}}$, where $1\leq \ell\leq t-1$;
\item $b_{t-1}=(a-b)^{-1}$;
\item $\frac{b_{\ell}\cdot b_{\ell+1}}{a_{\ell}\cdot a_{\ell-1}}=\frac{a_{\ell+1}\cdot b-1}{a_{\ell-1}\cdot b-1}$, where $2\leq \ell\leq t-2$.
\end{itemize}
\end{proposition}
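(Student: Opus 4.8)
The plan is to derive every item by purely algebraic manipulation from the recursions already in hand --- Propositions \ref{ab-recursive}, \ref{ab-algorithm}, \ref{prod-sum-a} and Definition \ref{beta} --- adopting throughout the convention $a_{0}^{-1}=0$, which is forced since $a_{1}=a=a-a_{0}^{-1}$ by Proposition \ref{ab-recursive}, so that the $\ell=1$ instances need no separate treatment. The first item, $b_{0}=b$, is immediate from $[u(a)\cdot s]^{0}\cdot[u(b)\cdot s\cdot u(-b)]=u(b)\cdot s\cdot u(-b)$ (it is also the middle relation of Proposition \ref{ab-recursive}). For the third item I would merely rearrange the other half of that relation: $b=b_{0}=a-b_{t-1}^{-1}$ gives $b_{t-1}^{-1}=a-b$, hence $b_{t-1}=(a-b)^{-1}$, where $a-b\neq 0$ because $(u(b)\cdot s)B$ is assumed distinct from $[u(a)\cdot s]^{1}B=(u(a)\cdot s)B$.

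For the triple equality in the second item, I would start from Proposition \ref{ab-algorithm}, which gives $b_{\ell}=\beta_{\ell}(a,b)\,\beta_{\ell-1}(a,b)^{-1}$ and $a_{k}=\alpha_{k}(a)\,\alpha_{k-1}(a)^{-1}$, and Definition \ref{beta}, which gives $\beta_{j}(a,b)=b\,\alpha_{j}(a)-\alpha_{j-1}(a)$. Substituting $\alpha_{\ell}(a)=a_{\ell}\,\alpha_{\ell-1}(a)$ and $\alpha_{\ell-2}(a)=a_{\ell-1}^{-1}\alpha_{\ell-1}(a)$ into $\bigl(b\alpha_{\ell}(a)-\alpha_{\ell-1}(a)\bigr)\bigl(b\alpha_{\ell-1}(a)-\alpha_{\ell-2}(a)\bigr)^{-1}$ and cancelling the common factor $\alpha_{\ell-1}(a)=\prod_{i=1}^{\ell-1}a_{i}$ --- nonzero, since by Propositions \ref{t-1} and \ref{ab-unique} the only vanishing $a_{i}$ with $1\le i\le t-1$ is $a_{t-1}$ --- gives the first form
$$b_{\ell}=(a_{\ell}b-1)\,(b-a_{\ell-1}^{-1})^{-1}.$$
The second form follows from the first relation of Proposition \ref{prod-sum-a}, $a_{\ell-1}a_{t-\ell}=1$, i.e. $a_{\ell-1}^{-1}=a_{t-\ell}$ (for $\ell=1$ both sides vanish); the third follows from the first relation of Proposition \ref{ab-recursive} written as $a_{\ell-1}^{-1}=a-a_{\ell}$, so that $b-a_{\ell-1}^{-1}=-(a-b-a_{\ell})$, the minus sign being absorbed into $a_{\ell}b-1=-(1-a_{\ell}b)$. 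An alternative avoiding Proposition \ref{ab-algorithm} is a direct induction on $\ell$: the base case is $b_{1}=a-b^{-1}=(ab-1)b^{-1}$, and the step uses $b_{\ell+1}=a-b_{\ell}^{-1}$ together with $a_{\ell+1}=a-a_{\ell}^{-1}$.

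For the last item I would plug in the first form just derived, using $b-a_{j}^{-1}=a_{j}^{-1}(a_{j}b-1)$ to rewrite $b_{\ell}=a_{\ell-1}(a_{\ell}b-1)(a_{\ell-1}b-1)^{-1}$ and $b_{\ell+1}=a_{\ell}(a_{\ell+1}b-1)(a_{\ell}b-1)^{-1}$; multiplying, the factor $(a_{\ell}b-1)$ cancels and
$$b_{\ell}\,b_{\ell+1}=a_{\ell-1}a_{\ell}\,(a_{\ell+1}b-1)(a_{\ell-1}b-1)^{-1},$$
which is precisely the claimed identity once one divides by $a_{\ell}a_{\ell-1}$. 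Every inversion used here is legitimate: each $b_{j}$ with $0\le j\le t-1$ is nonzero, since its inverse occurs in the cyclic recursion of Proposition \ref{ab-recursive}, and this forces $a_{j}b-1\neq0$ for $1\le j\le t-1$; also $a_{\ell},a_{\ell-1}\neq0$ for $2\le\ell\le t-2$, again by Propositions \ref{t-1} and \ref{ab-unique}.

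I expect the only real difficulty to be bookkeeping rather than ideas: keeping track of the admissible index ranges, checking the degenerate endpoints ($\ell=1$ via the convention $a_{0}^{-1}=0$, $\ell=t-1$ via $a_{t-1}=0$), and confirming at each step that the element being inverted is nonzero. Beyond that, each assertion is a mechanical combination of the recursions already established.
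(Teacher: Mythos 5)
Your proposal is correct and follows essentially the same route as the paper: you obtain the first form of $b_{\ell}$ from Proposition \ref{ab-algorithm} and Definition \ref{beta} by cancelling $\alpha_{\ell-1}(a)$, pass to the other two forms via $a_{\ell-1}^{-1}=a_{t-\ell}=a-a_{\ell}$, and derive the last identity from the intermediate expression $b_{\ell}=a_{\ell-1}(a_{\ell}b-1)(a_{\ell-1}b-1)^{-1}$. The only (harmless) deviations are that you get $b_{t-1}=(a-b)^{-1}$ directly from the cyclic recursion $b_{0}=a-b_{t-1}^{-1}$ rather than by substituting $\ell=t-1$, and that you are more explicit than the paper about the $\ell=1$ endpoint and the nonvanishing of the quantities being inverted.
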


\begin{proof}
First, $b_{0}=b$ by definition. By Theorem \ref{ab-dickson}, $$b_{\ell}=\frac{b\cdot\alpha_{\ell}(a)-\alpha_{\ell-1}(a)}{b\cdot\alpha_{\ell-1}(a)-\alpha_{\ell-2}(a)}$$
and $$\alpha_{\ell}(a)=\prod_{i=1}^{\ell}a_{i},$$
where $1\leq \ell\leq t-1$. Thus,

\begin{align*}
b_{\ell}&=\frac{\prod_{i=1}^{\ell-1}a_{i}\cdot(a_{\ell}\cdot b-1)}{\prod_{i=1}^{\ell-2}a_{i}\cdot(a_{\ell-1}\cdot b-1)} \\ &=\frac{a_{\ell-1}\cdot(a_{\ell}\cdot b-1)}{a_{\ell-1}\cdot b-1} \\ &=\frac{a_{\ell}\cdot b-1}{b-{a_{\ell-1}}^{-1}}.
\end{align*}

 Then, by Proposition \ref{prod-sum-a},
 $${a_{\ell-1}}^{-1}=a_{t-\ell}=a-a_{\ell}.$$ Therefore,
 $$b_{\ell}=\frac{a_{\ell}\cdot b-1}{b-a_{t-\ell}}=\frac{1-a_{\ell}\cdot b}{a-b-a_{\ell}}.$$ Now, we show
 $$b_{t-1}=(a-b)^{-1}.$$
 By Proposition \ref{t-1}, $a_{t-1}=0$. Therefore, by substituting $\ell=t-1$ we get the desired result. Now, by using $$b_{\ell}=\frac{a_{\ell-1}\cdot(a_{\ell}\cdot b-1)}{a_{\ell-1}\cdot b-1},$$
 we get
 $$\frac{b_{\ell}\cdot b_{\ell+1}}{a_{\ell}\cdot a_{\ell-1}}=\frac{a_{\ell+1}\cdot b-1}{a_{\ell-1}\cdot b-1},$$
 where $2\leq \ell\leq t-2$.
\end{proof}

\begin{proposition}\label{b-1-property}
Let $G=PSL_{2}(q)$ such that $q$ is odd. For every $1\leq k\leq t-1$, $0\leq \ell\leq t-1$, let $a_{k}$ and $b_{\ell}$ be elements of $\mathbb{F}_{q}$ as it is defined in Proposition \ref{ab-unique}. Then the following holds:
\begin{itemize}
\item It is possible to choose either $b=1$ or $b=-1$;
\item If $b=1$ or $b=-1$, then for all $0\leq \ell\leq t-1$:
\begin{itemize}
\item $b_{\ell}\cdot b_{t-\ell}=1$, where $b_{t}=b_{0}=b$;
\item $b_{\ell}+b_{t-\ell+1}=a$.
\end{itemize}
\end{itemize}
\end{proposition}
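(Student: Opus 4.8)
The plan is to realise $G/B$ as the projective line $\mathbb{P}^1(\mathbb{F}_q)=\mathbb{F}_q\cup\{\infty\}$ on which $G=PSL_2(q)$ acts by M\"obius transformations, $B$ being the stabiliser of $\infty$ (so $gB\mapsto g\cdot\infty$). Reading off the matrix representative of $\hat u(a)\hat s$ from Definition \ref{bn-presentation-matrix}, the element $u(a)\cdot s$ acts as $\phi\colon z\mapsto a-z^{-1}$; moreover $[u(a)\cdot s]^kB$ corresponds to the point $a_k$ (for $1\le k\le t-1$) and $B$ to $\infty$, while $(u(b)\cdot s)B$ corresponds to $b$ — each time because the trailing factors of the shape $u(\cdot)$ and $h(\cdot)$ fix $\infty$, the factor $s$ sends $\infty$ to $0$, and $u(c)$ sends $0$ to $c$. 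By Propositions \ref{ab-recursive} and \ref{t-1}, $\phi$ permutes $\{\infty,a_1,\dots,a_{t-1}\}$ as the single $t$-cycle $\infty\to a_1\to\cdots\to a_{t-1}=0\to\infty$ and, once an admissible $b$ is fixed, permutes $\{b_0,\dots,b_{t-1}\}$ as the $t$-cycle $b_0\to b_1\to\cdots\to b_{t-1}\to b_0$; by Proposition \ref{ab-unique} these two cycles together exhaust $\mathbb{P}^1(\mathbb{F}_q)$. Thus $\mathbb{P}^1(\mathbb{F}_q)=O_1\sqcup O_2$ is the decomposition into the two $\phi$-orbits $O_1=\{\infty,a_1,\dots,a_{t-1}\}$ and $O_2=\{b_0,\dots,b_{t-1}\}$, each of size $t$; all indices on the $b_\ell$ below are read modulo $t$, with $b_t=b_0=b$.

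For the first assertion, $b$ is an admissible choice exactly when $(u(b)\cdot s)B$ differs from every $[u(a)\cdot s]^kB$, i.e. when $b\notin O_1$, i.e. when $b\ne a_k$ for all $1\le k\le t-1$. Suppose $a_i=1$ for some $i$; since $a_{t-1}=0$ we have $1\le i\le t-2$, so Proposition \ref{prod-sum-a} gives $a_{t-i-1}=a_i^{-1}=1$, and since the $a_k$ are pairwise distinct this forces $i=t-i-1$, hence $t$ is odd and $i=(t-1)/2$. Running the same argument with $a_j=-1$ would give $j=(t-1)/2$ as well, whence $1=a_{(t-1)/2}=-1$, impossible as $q$ is odd. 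So $1$ and $-1$ cannot both occur among $a_1,\dots,a_{t-1}$, and at least one of $b=1$, $b=-1$ lies outside $O_1$ and is admissible.

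For the second assertion, fix $b\in\{1,-1\}$, so $b^2=1$ and $b_0^{-1}=b_0$. Consider the involution $\rho\colon z\mapsto z^{-1}$ of $\mathbb{P}^1(\mathbb{F}_q)$ (an element of $PGL_2(q)$); a one-line computation gives $\rho\phi\rho=\phi^{-1}$, where $\phi^{-1}\colon z\mapsto(a-z)^{-1}$. Since $\rho(\infty)=0=a_{t-1}\in O_1$ and $\rho$ conjugates $\phi$ to $\phi^{-1}$, it maps the $\phi$-orbit $O_1$ to itself, hence maps $O_2$ to itself. Now $\rho(b_0)=b_0^{-1}=b_0$, and from $\rho\phi^{\ell}=\phi^{-\ell}\rho$ we obtain $\rho(b_\ell)=\phi^{-\ell}(b_0)=b_{-\ell}=b_{t-\ell}$, that is $b_\ell\cdot b_{t-\ell}=1$. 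The additive identity then follows formally: by Proposition \ref{ab-recursive}, $b_\ell=a-b_{\ell-1}^{-1}$, and $b_{\ell-1}^{-1}=b_{t-\ell+1}$ by the multiplicative identity just proved, so $b_\ell+b_{t-\ell+1}=a$.

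The only genuinely delicate step is the first assertion, where one must rule out $\{1,-1\}\subseteq\{a_1,\dots,a_{t-1}\}$; this is exactly the place where the palindromic relation $a_k a_{t-k-1}=1$ of Proposition \ref{prod-sum-a}, together with the distinctness of the $a_k$, does the work. Everything in the second assertion is then formal once one notices that $\rho$ is the reflection conjugating the rotation $\phi$ to $\phi^{-1}$ and fixing the base point $0\in O_1$, and that the hypothesis $b=\pm1$ is precisely what makes $b_0$ a fixed point of $\rho$, pinning the centre of that reflection at $\ell=0$. One could equally run the additive identity through the second involution $\iota\colon z\mapsto a-z$, which also conjugates $\phi$ to $\phi^{-1}$ and for which $\iota(b_0)=b_1$; or one could verify both identities by manipulating the closed formula $b_\ell=(a_\ell b-1)(b-a_{t-\ell})^{-1}$ of Proposition \ref{b-property} using $a_\ell+a_{t-\ell}=a$ and $b^2=1$, but the argument via $\rho$ is the shortest and explains why these are the shifted analogues of the relations in Proposition \ref{prod-sum-a}.
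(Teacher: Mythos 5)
Your proof is correct. For the first bullet you argue essentially as the paper does: the relation $a_k a_{t-k-1}=1$ of Proposition \ref{prod-sum-a} together with the pairwise distinctness of the $a_k$ from Proposition \ref{ab-unique} forces any index $i$ with $a_i=\pm1$ to satisfy $i=t-i-1$, so $1$ and $-1$ cannot both occur among $a_1,\dots,a_{t-1}$; the paper reaches the same conclusion by splitting into the cases $4\mid q+1$ and $4\mid q-1$ and invoking Conclusion \ref{a-middle}, whereas your version handles both cases uniformly and does not need that conclusion. For the second bullet your route is genuinely different. The paper substitutes the closed formula $b_\ell=(1-a_\ell b)(a-b-a_\ell)^{-1}$ of Proposition \ref{b-property} together with $a_{t-\ell}=a-a_\ell$ and cancels, separately for $b=1$ and $b=-1$; you instead realise $G/B$ as $\mathbb{P}^1(\mathbb{F}_q)$, identify $\phi\colon z\mapsto a-z^{-1}$ as the $t$-cycle driving both sequences, and observe that the inversion $\rho\colon z\mapsto z^{-1}$ conjugates $\phi$ to $\phi^{-1}$ and fixes $b_0=\pm1$, whence $\rho(b_\ell)=\phi^{-\ell}(b_0)=b_{t-\ell}$. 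This is cleaner and explains structurally why the hypothesis $b=\pm1$ is the right one (it makes $b_0$ a fixed point of the reflection $\rho$, just as $0=a_{t-1}=\rho(\infty)$ anchors the corresponding symmetry of the $a_k$ in Proposition \ref{prod-sum-a}), at the cost of introducing the projective-line model, which the paper never makes explicit. Your deduction of the additive identity from the multiplicative one via $b_\ell=a-b_{\ell-1}^{-1}$ coincides with the paper's final step. The details I checked --- the M\"obius form of $\hat u(a)\hat s$, the identification of $[u(a)\cdot s]^kB$ with $a_k$ and of $[u(a)\cdot s]^k[u(b)\cdot s\cdot u(-b)]B$ with $b_k$, the relation $\rho\phi\rho=\phi^{-1}$, and the fact that $0$ and $\infty$ lie in $O_1$ so that all inversions on $O_2$ are genuine field inversions --- are all sound.
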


\begin{proof}
First, consider the case $4 | q+1$ (i.e., $q\equiv 3 ~~mod ~~4$). Then, $t$ is even and, therefore, $t-k-1\neq k$ for any integer $k$, such that $1\leq k\leq t-1$. Therefore, by Proposition \ref{ab-unique}, $$a_{t-k-1}\neq a_{k},$$ where $1\leq k\leq t-1$. By Proposition \ref{prod-sum-a}, $$a_{t-k-1}=a_{k}^{-1},$$ therefore, $a_{k}\neq \pm{1}$, for any integer $k$ such that $1\leq k\leq t-1$. Thus, by Proposition \ref{ab-unique}, we may choose $b=1$ or $b=-1$ in case $4 | q+1$. Now, consider the case $4 | q-1$ (i.e., $q\equiv 1 ~~mod ~~4$). Then, $t$ is odd and, therefore, $t-k-1=k$ if and only if $k=\frac{t-1}{2}=\frac{q-1}{4}$, where $k$ is an integer such that $1\leq k\leq t-1$. Therefore, by the same argument as in the case of  $q\equiv 3 ~~mod ~~4$, $a_{k}\neq \pm{1}$, for any integer $k\neq \frac{q-1}{4}$ such that $1\leq k\leq t-1$. By Proposition \ref{a-middle}, $a_{\frac{q-1}{4}}=1$ or $a_{\frac{q-1}{4}}=-1$. Thus by Proposition \ref{ab-unique}, we may choose $b=1$ or $b=-1$, such that $b\neq a_{\frac{q-1}{4}}$, in case $4 | q-1$ too. Thus, we may assume either $b=1$ or $b=-1$. We prove the second part of the proposition. For $\ell=1$, we have by Proposition \ref{ab-recursive} $$b_{1}=a-b^{-1},$$
and for $\ell=t-1$, we have by Proposition \ref{b-property} $$b_{t-1}=(a-b)^{-1}.$$
Thus, in case of $b=1$ or $b=-1$, $$b_{1}\cdot b_{t-1}=\frac{a-b^{-1}}{a-b}=\frac{a-b}{a-b}=1.$$
By Proposition \ref{b-property}, $$b_{\ell}=\frac{1-a_{\ell}\cdot b}{a-b-a_{\ell}},$$
and $$b_{t-\ell}=\frac{1-a_{t-\ell}\cdot b}{a-b-a_{t-\ell}}.$$
By Proposition \ref{prod-sum-a}, $$a_{t-\ell}=a-a_{\ell},$$
Thus, $$b_{t-\ell}=\frac{1-(a-a_{\ell})\cdot b}{a_{\ell}-b}.$$
Thus, using $b=1$ or $b=-1$:
$$b_{\ell}\cdot b_{t-\ell}=\frac{1-a_{\ell}}{a-1-a_{\ell}}\cdot\frac{1-a+a_{\ell}}{a_{\ell}-1}=1,$$
or
$$b_{\ell}\cdot b_{t-\ell}=\frac{1+a_{\ell}}{a+1-a_{\ell}}\cdot\frac{1+a-a_{\ell}}{a_{\ell}+1}=1.$$
Now, by Proposition \ref{ab-recursive}, $$b_{t-\ell+1}=a-b_{t-\ell}^{-1}.$$
Thus, $$b_{\ell}+b_{t-\ell+1}=a.$$
\end{proof}

\begin{proposition}\label{b-h-property}
Let $G=PSL_{2}(q)$ such that $4 | q-1$. For every $1\leq k\leq t-1$, $0\leq \ell\leq t-1$, let $a_{k}$ and $b_{\ell}$ be elements of $\mathbb{F}_{q}$ as it is defined in Proposition \ref{ab-unique}. Then the following holds:
\begin{itemize}
\item It is possible to choose $b=\frac{a}{2}$;
\item If $b=\frac{a}{2}$, then for all $0\leq \ell\leq t-1$:
\begin{itemize}
\item $b_{\ell}+b_{t-\ell}=a$, where $b_{t}=b_{0}=b$;
\item $b_{\ell}\cdot b_{t-\ell-1}=1$;
\item $b_{\frac{t-1}{2}}=1$ or $b_{\frac{t-1}{2}}=-1$.
\end{itemize}
\end{itemize}
\end{proposition}

\begin{proof}
Assume $a_{k}=\frac{a}{2}$, for some $1\leq k\leq t-1$. Since by Proposition \ref{prod-sum-a}, $a_{t-k}=a-a_{k}$ for every $1\leq k\leq t-1$, we have that if $a_{k}=\frac{a}{2}$ then $a_{t-k}=a-\frac{a}{2}=\frac{a}{2}$ as well. Therefore, by Proposition \ref{ab-unique}, $k=t-k$, which means $t$ is even. Hence, in case $4 | q-1$ (i.e., the case where $t$ is odd), there is no $k$ such that $a_{k}=\frac{a}{2}$. Thus in that case, there is possible to choose $b=\frac{a}{2}$. Now, we prove the second part of the proposition.
For $\ell=t-1$, we have by Proposition \ref{b-property}
First, notice $b_{0}=\frac{a}{2}$. Hence, $$b_{t}+b_{0}=\frac{a}{2}+\frac{a}{2}=a.$$. By Proposition \ref{b-property}, $$b_{\ell}=\frac{1-a_{\ell}\cdot b}{a-b-a_{\ell}},$$
and $$b_{t-\ell}=\frac{1-a_{t-\ell}\cdot b}{a-b-a_{t-\ell}}.$$
Where, by using $b=\frac{a}{2}$, we have
$$b_{\ell}=\frac{2-a_{\ell}\cdot a}{a-2\cdot a_{\ell}}, ~~~~ b_{t-\ell}=\frac{a^{2}-a_{\ell}\cdot a-2}{a-2\cdot a_{\ell}}.$$
Hence, $$b_{\ell}+b_{t-\ell}=\frac{a^{2}-2\cdot a_{\ell}\cdot a}{a-2\cdot a_{\ell}}=a.$$
Now we turn to the proof of $b_{\ell}\cdot b_{t-\ell-1}=1$. First notice,
$$b_{t-1}=\left(a-b\right)^{-1}=\left(a-\frac{a}{2}\right)^{-1}=\left(\frac{a}{2}\right)^{-1}.$$
Hence, $$b_{0}\cdot b_{t-1}=1.$$
By Proposition \ref{ab-recursive}, $$b_{\ell+1}=a-b_{\ell}^{-1},$$ for every $0\leq \ell\leq t-1$. Hence, by using the argument $b_{\ell}+b_{t-\ell}=a$, we conclude
$$b_{\ell}^{-1}=a-b_{\ell+1}=b_{t-\ell-1}.$$
Then in case $\ell=\frac{t-1}{2}$: $$b_{\frac{t-1}{2}}^{2}=1.$$
Hence, $b_{\frac{t-1}{2}}=1$ or $b_{\frac{t-1}{2}}=-1$.
\end{proof}
\\

The next proposition shows some interesting properties of $\alpha_{k}(a)$, the Dickson polynomial of the second kind over $\mathbb{F}_{q}$ in variable $a$, and the related polynomials $\beta_{k}(a,b)$ and $\gamma_{k}(a,b)$, as it is defined in Definition \ref{beta}, which we conclude by using the theorems and the propositions in this section.

\begin{proposition}\label{alpha-property}
Let $G=PSL_{2}(q)$. For every $-1\leq r\leq t-1$, let  $\alpha_{r}(a)$, $\beta_{r}(a,b)$, $\gamma_{r}(a,b)$ be elements of $\mathbb{F}_{q}$ as it is defined in Definitions \ref{alpha}, \ref{beta}. Then the following holds:
\begin{itemize}
\item $\alpha_{t-1}(a)=0$;
\item $\alpha_{t-2}(a)=1$ or $\alpha_{t-2}(a)=-1$;
\item $\alpha_{t-k-2}(a)=\alpha_{k}$ or $\alpha_{t-k-2}(a)=-\alpha_{k}(a)$, where $-1\leq k\leq t-1$;
\item $\alpha_{\ell}(a)\cdot\alpha_{k-1}(a)-\alpha_{\ell-1}(a)\cdot\alpha_{k}(a)=\alpha_{k-\ell-1}(a)$, where $0\leq \ell\leq k\leq t-1$;
\item $\alpha_{k}^{2}(a)-\alpha_{k+1}(a)\cdot\alpha_{k-1}(a)=1$, where $0\leq k\leq t-2$;
\item $P_{a}(a_{k})=\alpha_{k-1}^{-2}(a)$, where $1\leq k\leq t-1$;
\item $\alpha_{k}(a)\cdot \beta_{k-1}(a,b)-\alpha_{k-1}(a)\cdot \beta_{k}(a,b)=1$, where $0\leq k\leq t-1$;
\item $\gamma_{k}(a,b)\cdot \beta_{k-1}(a,b)-\gamma_{k-1}(a,b)\cdot \beta_{k}(a,b)=1$, where $0\leq k\leq t-1$.
\item If $b=\frac{a}{2}$, then for every $0\leq k\leq t-1$ the following holds:
\begin{itemize}
\item $\beta_{t-1}(a,b)=1$ or $\beta_{t-1}(a,b)=-1$ (which means the Chebyshev polynomial of the first kind on variable $b$ satisfies $T_{t}(b)=1$ or $T_{t}(b)=-1$ over $\mathbb{F}_{q}$ for every $q\equiv 1 ~~mod ~~4$);
\item $\beta_{k}(a,b)=\beta_{t-1}(a,b)\cdot \beta_{t-k-2}(a,b)$;
\item $\beta_{k}^{2}(a,b)-\beta_{k+1}(a,b)\cdot\beta_{k-1}(a,b)=1-b^{2}$;
\item $P_{a}(b_{k})=\left(1-b^{2}\right)\cdot\beta_{k-1}^{-2}(a,b)$.
\end{itemize}
\item If $b=1$ or $b=-1$, then $\beta_{t-k-1}(a,b)=b\cdot \beta_{k}(a,b)$, where $0\leq k\leq t-1$.
\end{itemize}
\end{proposition}

\begin{proof}
By Proposition \ref{t-1}, $a_{t-1}=0$, and by Theorem \ref{ab-dickson}, $$a_{t-1}=\frac{\alpha_{t-1}(a)}{\alpha_{t-2}(a)}.$$
Thus, $\alpha_{t-1}(a)=0$. First, we prove
$$\alpha_{t-2}(a)\cdot \alpha_{k}(a)=\alpha_{t-k-2}(a),$$
for every $-1\leq k\leq t-1$. Notice that the statement holds for $k=-1$ and for $k=0$, since $\alpha_{-1}(a)=\alpha_{t-1}(a)=0$. Notice,
$$\alpha_{t-1}(a)=a\cdot \alpha_{t-2}(a)-\alpha_{t-3}(a).$$
Thus,
$$\alpha_{t-3}(a)=a\cdot \alpha_{t-2}(a).$$
Therefore, the statement holds for $k=1$ as well. \\ Now, assume by induction,
$$\alpha_{t-2}(a)\cdot \alpha_{r}(a)=\alpha_{t-r-2}(a),$$
for every $r\leq k$, and we prove the statement for $r=k+1$. By Theorem \ref{ab-dickson}, $$\alpha_{t-2}(a)\cdot\alpha_{k+1}(a)=\alpha_{t-2}(a)\cdot\alpha_{k}(a)\cdot a_{k+1}.$$
Then, by our induction hypothesis, we get
$$\alpha_{t-2}(a)\cdot\alpha_{k}(a)\cdot a_{k+1}=\alpha_{t-k-2}(a)\cdot a_{k+1}.$$
Then, by again using Theorem \ref{ab-dickson},
$$\alpha_{t-k-2}(a)\cdot a_{k+1}=\alpha_{t-k-3}(a)\cdot a_{t-k-2}\cdot a_{k+1}.$$ Hence, by Proposition \ref{prod-sum-a},
$$a_{t-k-2}\cdot a_{k+1}=1.$$ Therefore, we get
$$\alpha_{t-2}(a)\cdot\alpha_{k+1}(a)=\alpha_{t-k-3}(a).$$ Thus, the statement
$$\alpha_{t-2}(a)\cdot \alpha_{k}(a)=\alpha_{t-k-2}(a)$$
holds for every $-1\leq k\leq t-1$. Now, by substituting $k=t-2$, we get $$\alpha_{t-2}^{2}(a)=\alpha_{0}(a)=1.$$ Hence, either $$\alpha_{t-2}(a)=1$$
or $$\alpha_{t-2}(a)=-1$$
in case $q$ is odd. By the same argument, $$\alpha_{t-2}(a)=1,$$ in case $\mathbb{F}_{q}$ has characteristic $2$.
By using
$$\alpha_{t-2}(a)\cdot \alpha_{k}(a)=\alpha_{t-k-2}(a),$$
we have that either
$$\alpha_{t-k-2}(a)=\alpha_{k}(a)$$
 or
 $$\alpha_{t-k-2}(a)=-\alpha_{k}(a),$$
 in case $q$ is odd and
 $$\alpha_{t-k-2}(a)=\alpha_{k}(a)$$
 necessarily, in case $\mathbb{F}_{q}$ has characteristic $2$.
 Now, we turn to the next two parts of the proposition. If $k=\ell$, we have
$$\alpha_{k}(a)\cdot\alpha_{k-1}(a)-\alpha_{k-1}(a)\cdot\alpha_{k}(a)=\alpha_{-1}(a),$$
which leads to $0=0$, since $\alpha_{-1}=0$. Now, assume $k>\ell$. Then, by Proposition \ref{akl},
$$a_{\ell}-a_{k}=[(a_{1}\cdot a_{2}\cdots a_{\ell-1})\cdot (a_{k-\ell}\cdots a_{k-3}\cdot a_{k-2}\cdot a_{k-1})]^{-1}.$$
Now, by using Theorem \ref{ab-dickson}, $$\frac{\alpha_{\ell}(a)}{\alpha_{\ell-1}(a)}-\frac{\alpha_{k}(a)}{\alpha_{k-1}(a)}=\frac{\alpha_{k-\ell-1}(a)}{\alpha_{\ell-1}(a)\cdot \alpha_{k-1}(a)}.$$
Thus,
$$\alpha_{\ell}(a)\cdot\alpha_{k-1}(a)-\alpha_{\ell-1}(a)\cdot\alpha_{k}(a)=\alpha_{k-\ell-1}(a).$$
By considering the case $k=\ell+1$, and then substituting $k$ instead of $\ell$, we have
$$[\alpha_{k}(a)]^{2}-\alpha_{k+1}(a)\cdot \alpha_{k-1}(a)=\alpha_{0}(a)=1.$$ Thus,
\begin{align*}
(\alpha_{k}(a)+1)\cdot(\alpha_{k}(a)-1) &= \alpha^{2}_{k}(a)-1 \\ &= \alpha_{k+1}(a)\cdot \alpha_{k-1}(a).
\end{align*}
By substituting $\alpha_{k}(a)$ and $\alpha_{k-1}(a)$ for  $\alpha_{k+1}(a)$  according to Proposition \ref{alpha-recursive} we get
$$\alpha_{k}^{2}(a)-a\cdot \alpha_{k}(a)\cdot \alpha_{k-1}(a)+\alpha_{k-1}^{2}(a)=1,$$
which is by Theorem \ref{ab-dickson}, equivalent to
$$a_{k}^{2}-a\cdot a_{k}+1=P_{a}(a_{k})=\alpha_{k-1}^{-2}(a).$$
Now, by Definition \ref{beta},
$$\beta_{k}(a,b)=b\cdot\alpha_{k}(a)-\alpha_{k-1}(a).$$
Thus,
 \begin{align*}
1+\alpha_{k-1}(a)\cdot \beta_{k}(a,b) &= 1+\alpha_{k-1}(a)\cdot [b\cdot\alpha_{k}(a)-\alpha_{k-1}(a)] \\ &= 1+b\cdot\alpha_{k-1}(a)\cdot\alpha_{k}(a)-\alpha_{k-1}^{2}(a).
 \end{align*}
 Now, by using
 $$\alpha_{k-1}^{2}(a)-1=\alpha_{k}(a)\cdot\alpha_{k-2}(a),$$
 we get
 \begin{align*}
 1+\alpha_{k-1}(a)\cdot \beta_{k}(a,b) &= \alpha_{k}(a)\cdot[b\cdot\alpha_{k-1}(a)-\alpha_{k-2}(a)] \\ &=\alpha_{k}(a)\cdot\beta_{k-1}(a,b).
 \end{align*}
  The last part of the proposition is a consequence of it since
  \begin{align*}
  1+\gamma_{k-1}(a,b)\cdot \beta_{k}(a,b) &= 1+[\alpha_{k-1}(a)+b\cdot\beta_{k-1}(a,b)]\cdot \beta_{k}(a,b) \\&= [\alpha_{k}(a)+b\cdot\beta_{k}(a,b)]\cdot \beta_{k-1}(a,b) \\ &= \gamma_{k}(a,b)\cdot\beta_{k-1}(a,b).
  \end{align*}
Now, assume $b=\frac{a}{2}$. By Proposition \ref{b-h-property}, $b_{\frac{t-1}{2}}=1$ or $b_{\frac{t-1}{2}}=-1$. Hence, by Theorem \ref{ab-dickson}, $$\frac{\beta_{\frac{t+1}{2}}(a,b)}{\beta_{\frac{t-1}{2}}(a,b)}=b_{\frac{t-1}{2}},$$ which equals either to $1$ or to $-1$. Hence, either $$\beta_{\frac{t+1}{2}}(a,b)=\beta_{\frac{t-1}{2}}(a,b),$$ or $$\beta_{\frac{t+1}{2}}(a,b)=-\beta_{\frac{t-1}{2}}(a,b).$$ Now, since by Proposition \ref{b-h-property}, we have that $$b_{t-\ell-1}=b_{\ell}^{-1},$$
 by using Theorem \ref{ab-dickson}, we conclude that either $$\beta_{t-\ell-2}(a,b)=b_{\ell}(a,b)$$ or $$\beta_{t-\ell-2}(a,b)=-b_{\ell}(a,b),$$
and in case of $\ell=t-1$, we have either $$\beta_{t-1}(a,b)=1$$ or $$\beta_{t-1}(a,b)=-1.$$
Since by Proposition \ref{chebyshev} $\beta_{k}(a,b)$ is the Chebyshev polynomial of the first kind, $T_{k+1}(b)$ in variable $b$ over $\mathbb{F}_{q}$, $\beta_{k}(a,b)$ satisfies the following properties of Chebyshev polynomials of the first kind:
$$\beta_{k}^{2}(a,b)-\beta_{k+1}(a,b)\cdot\beta_{k-1}(a,b)=1-b^{2},$$ where $0\leq k\leq t-2$.
Now, since by Theorem \ref{ab-dickson}, $b_{k}=\frac{\beta_{k}(a,b)}{\beta_{k-1}(a,b)}$, and by Remark \ref{beta-gamma-linear}, $\beta_{k+1}=a\cdot \beta_{k}-\beta_{k-1}$, we conclude $$P_{a}(b_{k})=\left(1-b^{2}\right)\cdot\beta_{k-1}^{-2}(a,b),$$ where $1\leq k\leq t-1$.
Now, we turn to the case of $b=1$ or $b=-1$. By using Proposition \ref{b-1-property}, we get
 $$\beta_{t-k-1}(a,b)=b\cdot \beta_{k}(a,b),$$
 where $0\leq k\leq t-1$.

 \end{proof}

\begin{remark}
By Theorem \ref{main}, the group $PSL_{2}(q)$, for an odd $q$, has an $OGS$ presentation of the form:
$$[u(a)\cdot s]^{k}\cdot [u(b)\cdot s\cdot u(-b)]^{\ell}\cdot g,$$
such that:
\begin{itemize}
\item $0\leq k\leq t-1$;
\item $\ell=0$ or $\ell=1$;
\item $g$ is an element of the soluble subgroup $B$ of $PSL_{2}(q)$.
\end{itemize}
By Theorem \ref{b-a-a-b}, we will show the $OGS$ presentation of
$$[u(b)\cdot s\cdot u(-b)]\cdot [u(a)\cdot s]^{k}.$$
We will show some special interesting cases of it as well, which depends on some properties of the field $\mathbb{F}_{q}$.
\end{remark}

\begin{theorem}\label{b-a-a-b}
Let $q=p^{n}$, where $p$ is an odd prime, and $n\in \mathbb{N}$. For every $1\leq k\leq t-1$, $0\leq \ell\leq t-1$, and $-1\leq r\leq t-1$, let $a_{k}$, $b_{\ell}$, $\alpha_{r}(a)$, $\beta_{r}(a,b)$, and $\gamma_{r}(a,b)$ be elements of $\mathbb{F}_{q}$ as it is defined in Proposition \ref{ab-unique} and in Definitions \ref{alpha}, \ref{beta}.
Then for every $k_{1}$ and $k_{2}$ such that $0\leq k_{1}, k_{2}\leq t-1$ the following holds:
$$[u(b)\cdot s\cdot u(-b)]\cdot [u(a)\cdot s]^{k_{1}}\in [u(a)\cdot s]^{k_{2}}\cdot [u(b)\cdot s\cdot u(-b)]B$$
if and only if
$$\left(b-a_{t-k_{2}}\right)\cdot \left(a_{k_{1}}-b\right)^{-1}=\left(b-a+a_{k_{2}}\right)\cdot \left(a_{k_{1}}-b\right)^{-1}=P_{a}(b).$$
In particular, if $b$ satisfies
$$P_{a}(b)=-1 ~~~~~ (i.e., ~~ b^{2}-a\cdot b+2=0),$$
then for every $0\leq k\leq t-1$ we have:
$$[u(b)\cdot s\cdot u(-b)]\cdot [u(a)\cdot s]^{k}\in [u(a)\cdot s]^{-k}\cdot [u(b)\cdot s\cdot u(-b)]B$$

\end{theorem}

\begin{proof}
First, consider $[u(b)\cdot s\cdot u(-b)]\cdot [u(a)\cdot s]^{k_{1}}$ and $[u(a)\cdot s]^{k_{2}}\cdot [u(b)\cdot s\cdot u(-b)]$. By Proposition \ref{ab-unique},the following holds:
\begin{itemize}
\item $[u(a)\cdot s]^{k_{1}}=u(a_{k_{1}})\cdot s\cdot g$, such that $g\in B$;
\item $[u(a)\cdot s]^{k_{2}}\cdot [u(b)\cdot s\cdot u(-b)]=u(b_{k_{2}})\cdot s\cdot g'$, such that $g'\in B$.
\end{itemize}
Hence, by Proposition \ref{uhs-multiply},
\begin{align*}
[u(b)\cdot s\cdot u(-b)]\cdot [u(a)\cdot s]^{k_{1}}&=[u(b)\cdot s\cdot u(-b)]\cdot u(a_{k_{1}})\cdot s\cdot g \\ &=u(b-[a_{k_{1}}-b]^{-1})\cdot s\cdot g,
\end{align*}
such that $g\in B$.
Thus by Conclusion \ref{coset-bn-unique},
$$[u(b)\cdot s\cdot u(-b)]\cdot [u(a)\cdot s]^{k_{1}}\in [u(a)\cdot s]^{k_{2}}\cdot [u(b)\cdot s\cdot u(-b)]B,$$
 if and only if
\begin{equation}\label{a-k-1-k-2}
b-[a_{k_{1}}-b]^{-1}=b_{k_{2}}.
\end{equation}
Now, by using Theorem \ref{ab-dickson}, Definition \ref{beta}, and Proposition \ref{alpha-recursive}, the following holds:
\begin{align*}
b-b_{k_{2}}&=b-\frac{\beta_{k_{2}}(a,b)}{\beta_{k_{2}-1}(a,b)} \\ &= b-\frac{b\cdot\alpha_{k_{2}}(a)-\alpha_{k_{2}-1}(a)}{b\cdot\alpha_{k_{2}-1}(a)-\alpha_{k_{2}-2}(a)}\\ &= \frac{(b^{2}+1)\cdot \alpha_{k_{2}-1}(a)-b\cdot [\alpha_{k_{2}-2}(a)+\alpha_{k_{2}}(a)]}{b\cdot\alpha_{k_{2}-1}(a)-\alpha_{k_{2}-2}(a)}  \\ &= \frac{(b^{2}-a\cdot b+1)\cdot \alpha_{k_{2}-1}(a)}{b\cdot\alpha_{k_{2}-1}(a)-\alpha_{k_{2}-2}(a)}.
\end{align*}
Hence, by using Theorem \ref{ab-dickson} and Proposition \ref{prod-sum-a}, the following holds:
\begin{align*}
\left(b-b_{k_{2}}\right)^{-1} &=\frac{b-a_{k_{2}-1}^{-1}}{b^{2}-a\cdot b+1} \\ &= \frac{b-a_{t-k_{2}}}{b^{2}-a\cdot b+1} \\ &= \frac{b-a+a_{k_{2}}}{b^{2}-a\cdot b+1}.
\end{align*}
Therefore,
by Equation \ref{a-k-1-k-2}, we conclude:
\begin{align*}
a_{k_{1}}-b=\left(b-b_{k_{2}}\right)^{-1}&=\frac{b-a_{t-k_{2}}}{b^{2}-a\cdot b+1} \\&=\frac{b-a+a_{k_{2}}}{b^{2}-a\cdot b+1} \\&=\frac{b-a+a_{k_{2}}}{P_{a}(b)}.
\end{align*}

Hence,
$$[u(b)\cdot s\cdot u(-b)]\cdot [u(a)\cdot s]^{k_{1}}\in [u(a)\cdot s]^{k_{2}}\cdot [u(b)\cdot s\cdot u(-b)]B$$
if and only if
$$\frac{b-a_{t-k_{2}}}{a_{k_{1}}-b}=\frac{b-a+a_{k_{2}}}{a_{k_{1}}-b}=P_{a}(b).$$

Now. consider the case, where $b^{2}-a\cdot b+2=0$. Then, by the first part of the Theorem we have:
$$-1=P_{a}(b)=\frac{b-a_{t-k_{2}}}{a_{k_{1}}-b},$$
which implies:
$$b-{a_{k_1}}=b-a_{t-k_{2}}.$$
Therefore, the theorem holds.
\end{proof}
\\

The next corollary shows an interesting application of Theorem \ref{b-a-a-b}, in the special case where $b=\frac{a}{2}$ (The only case of $b$, where there is no $r\neq b$ such that $P_{a}(b)=P_{a}(r)$). Since in that case by Proposition \ref{chebyshev}, for every $0\leq k\leq t-1$, $\alpha_{k}(a)=U_{k}(b)$ (The Chebyshev polynomial of the second kind on variable $b$) and $\beta_{k-1}(a,b)=T_{k}(b)$ (The Chebyshev polynomial of the first kind on variable $b$), we conclude by Theorem \ref{ab-dickson} that $a_{k}=\frac{U_{k}(b)}{U_{k-1}(b)}$ and $b_{k}=\frac{T_{k+1}(b)}{T_{k}(b)}$ (i.e., both $a_{k}$ and $b_{k}$ are quotients of Chebyshev polynomials on variable $b$).

\begin{corollary}\label{b-half-a}
Let $q=p^{n}$, where $p$ is an odd prime, $n\in \mathbb{N}$ and $4 | q-1$.  Assume  $a=2b$ (i.e., $b=\frac{a}{2}$).
Then for every $k_{1}$ and $k_{2}$ such that $0\leq k_{1}, k_{2}\leq t-1$ the following holds:
$$[u(b)\cdot s\cdot u(-b)]\cdot [u(a)\cdot s]^{k_{1}}\in [u(a)\cdot s]^{k_{2}}\cdot [u(b)\cdot s\cdot u(-b)]B$$
if and only if
$$\frac{a_{k_{2}}-b}{a_{k_{1}}-b}=P_{a}(b)=1-b^{2}.$$
\end{corollary}

\begin{proof}
Assume, $q=p^{n}$, where $p$ is an odd prime, $n\in \mathbb{N}$ and $4 | q-1$. By Theorem \ref{main}, the order of the element $u(a)\cdot s$ is $t=\frac{q+1}{2}$ (which is odd, since $4 | q-1$) in $PSL_{2}(q)$. By proposition \ref{prod-sum-a}, $a_{t-k}=a-a_{k}$ for every $1\leq k\leq t-1$, and  by Proposition \ref{ab-unique}, the $t-1$ elements of the form $a_{k}$ are all different. Hence, there is no $k$ such that $1\leq k\leq t-1$, and $a_{k}=\frac{a}{2}$. Hence,  it is possible to choose $b$ such that $a=2b$ and the left coset $(u(b)\cdot s)B$ is different from  all left cosets of the form  $[u(a)\cdot s]^{k}B$ for $0\leq k<\frac{q+1}{2}$.
Hence, the conditions of Theorems \ref{main} and \ref{b-a-a-b} holds, and by applying Theorem \ref{b-a-a-b} for the case of $a=2b$, we conclude the result of the corollary.
\end{proof}
\\

Proposition \ref{chebyshev} and Corollary \ref{b-half-a} demonstrates interesting properties of the $OGS$ for $PSL_{2}(q)$, where $q$ is odd and there possible to choose $a, b\in \mathbb{F}_{q}$, such that $b=\frac{a}{2}$. By Corollary \ref{a-middle}, $a_{\frac{q+1}{4}}=\frac{a}{2}$, in case of $\mathbb{F}_{q}$, where $4 | q+1$. Therefore, by Proposition \ref{ab-unique}, there is no possibility to choose $a, b\in \mathbb{F}_{q}$ such that $b=\frac{a}{2}$, but the next corollary demonstrates that the choice of $a,b\in\mathbb{F}_{q}$, such that $b^{2}-a\cdot b+2=0$, leads to interesting properties of the defined $OGS$ in Theorem \ref{main}, for the case where $4 |q+1$.

\begin{corollary}\label{4-not-divide}
Let $q=p^{n}$, where $p$ is an odd prime, $n\in \mathbb{N}$ and $4 | q+1$. If $b$ can be chosen such that $P_{a}(b)=-1$ (i.e., the polynomial $\lambda^{2}-a\cdot\lambda+2$ is reducible over $\mathbb{F}_{q}$), then for every $0\leq k\leq t-1$ we have:
$$[u(b)\cdot s\cdot u(-b)]\cdot [u(a)\cdot s]^{k}\in [u(a)\cdot s]^{-k}\cdot [u(b)\cdot s\cdot u(-b)]B.$$
\end{corollary}

\begin{proof}
By Proposition \ref{alpha-property}, $P_{a}(a_{k})=\alpha_{k-1}^{-2}$. Since $4 | q+1$, there is no element $x\in \mathbb{F}_{q}$, such that $x^{2}=-1$. Hence, for every $0\geq k\leq t-2$, $\alpha_{k-1}^{-2}\neq -1$. Thus there is no $a_{k}$ such that $P_{a}(a_{k})=a_{k}^{2}-a\cdot a_{k}+1=-1$. But since the polynomial $\lambda^{2}-a\cdot\lambda+2$ is reducible over $\mathbb{F}_{q}$ reducible over $\mathbb{F}_{q}$, its root can be chosen as $b\neq a_{k}$ for any $k$ such that $0\leq k\leq t-2$. Then by Theorem \ref{b-a-a-b}, we get the result of the corollary.
\end{proof}

\begin{remark}\label{fib3}
In contrast to Corollary \ref{b-half-a}, which is applicable in every $\mathbb{F}_{q}$ such that $4 | q-1$, to apply Corollary \ref{4-not-divide}, we need an element $a\in \mathbb{F}_{q}$ such that all the following holds
\begin{itemize}
\item The polynomial $P_{a}(\lambda)$ is an irreducible polynomial over $\mathbb{F}_{q}$;
\item The root $\omega$ of the polynomial $P_{a}(\lambda)$ has an order $q+1$ over $\mathbb{F}_{q^{2}}$;
\item The polynomial $P_{a}(\lambda)+1$ (i.e., $\lambda^{2}-a\cdot\lambda+2$) is a reducible polynomial over $\mathbb{F}_{q}$ (otherwise there is no possibility for a choice of $b\in \mathbb{F}_{q}$ such that $P_{a}(b)=-1$).
\end{itemize}
There are finite fields $\mathbb{F}_{q}$, where $4 | q+1$, but any $a\in \mathbb{F}_{q}$ does not satisfied the three mentioned conditions above (e.g., $\mathbb{F}_{11}$). The case where Corollary \ref{4-not-divide} is applicable for $a=3$ is an interesting case, since then the choice of $b=1$ is applicable as well ($b=1$ is a root of the polynomial $\lambda^{2}-3\cdot \lambda+2)$ for satisfying Corollary \ref{4-not-divide}. Then by Proposition \ref{fibonacci}, both $\alpha_{k}$ and $\beta_{\ell}$ are Fibonacci numbers over $\mathbb{F}_{q}$ for $1\leq k\leq t-1$ and $0\leq \ell\leq t-1$, and by Theorem \ref{ab-dickson}
$$a_{k}=\frac{Fib_{2k+2}}{Fib_{2k+1}} ~~~~ b_{k}=\frac{Fib_{2k+1}}{Fib_{2k}}.$$
\end{remark}

The next theorem demonstrates very interesting connections between the $BN-pair$ presentation of the element $[u(a)\cdot s]^{k}$ of $PSL_{2}(q)$ to $\alpha_{r}(a)$ (The Dickson polynomial of the second kind on variable $a$ over $\mathbb{F}_{q}$), and the presentation of  $[u(a)\cdot s]^{\ell}\cdot [u(b)\cdot s\cdot u(-b)]$ to $\beta_{r}(a,b)$, and $\gamma_{r}(a,b)$ (Where by Definition \ref{beta}, both $\beta_{r}(a,b)$, and $\gamma_{r}(a,b)$ are linear combinations of $\alpha_{r}(a)$ and $\alpha_{r-1}(a)$, and in case $b=\frac{a}{2}$, by Proposition \ref{chebyshev}, $\beta_{r}(a,b)$ is the Chebyshev polynomial of the first kind on variable $b$ over $\mathbb{F}_{q}$), for $1\leq k\leq t-1$, $0\leq \ell\leq t-1$, $-1\leq r\leq t-1$ and $t=\frac{q+1}{gcd(2, q+1)}$.

\begin{theorem}\label{x-k-y-k}
Let $G=PSL_{2}(q)$. For every  $1\leq k\leq t-1$, $0\leq \ell\leq t-1$, and  $-1\leq r\leq t-1$, let $a_{k}$ and $b_{\ell}$ be elements of $\mathbb{F}_{q}$ as it is defined in Proposition \ref{ab-unique}  and let $\alpha_{r}(a)$,  $\beta_{r}(a,b)$, and $\gamma_{r}(a,b)$ be elements of $\mathbb{F}_{q}$ as it is defined in Definitions \ref{alpha} and \ref{beta}. Then

$$[u(a)\cdot s]^{k}=u(a_{k})\cdot s\cdot u(x_{k})\cdot h(y_{k})$$
 and
 $$[u(a)\cdot s]^{\ell}\cdot [u(b)\cdot s\cdot u(-b)]=u(b_{\ell})\cdot s\cdot u({x}'_{\ell})\cdot h({y}'_{\ell}),$$
such that:
\begin{itemize}
\item $x_{k}=-\alpha_{k-2}(a)\cdot \alpha_{k-1}(a)$;
\item $y_{k}=\alpha_{k-1}(a)$;
\item ${x}'_{\ell}=-\beta_{\ell-1}(a,b)\cdot \gamma_{\ell-1}(a,b)$;
\item ${y}'_{\ell}=\beta_{\ell-1}(a,b)$;

\end{itemize}
\end{theorem}

\begin{proof}
We prove the results of the theorem for $x_{k}$, $y_{k}$, ${x}'_{k}$, and for ${y}'_{k}$ by induction on $k$. Recall, by Proposition \ref{ab-unique}, $$[u(a)\cdot s]^{k}=u(a_{k})\cdot s\cdot u(x_{k})\cdot h(y_{k})$$ and $$[u(a)\cdot s]^{k}\cdot[u(b)\cdot s\cdot u(-b)]=u(b_{k})\cdot s\cdot u({x}'_{k})\cdot h({y}'_{k}).$$ By substituting $k=1$, we have
\begin{align*}
u(a)\cdot s &= u(a)\cdot s\cdot u(0)\cdot h(1) \\ &= u(a_{1})\cdot s\cdot u(-\alpha_{-1}(a)\cdot \alpha_{0}(a))\cdot h(\alpha_{0}(a)) \\ &= u(a_{1})\cdot s\cdot u(x_{1})\cdot h(y_{1}).
\end{align*}
Thus, $x_{1}=-\alpha_{-1}(a)\cdot \alpha_{0}(a)$ and $y_{1}=\alpha_{0}(a)$. \\
By using the multiplication laws which we have described in Proposition \ref{uhs-multiply},  we have also,
\begin{align*}
u(a)\cdot s\cdot u(b)\cdot s\cdot u(-b) &= u(a-b^{-1})\cdot s\cdot u(-b)\cdot h(b)\cdot u(-b) \\ &= u(b_{1})\cdot s\cdot u(-b-b^{3})\cdot h(b) \\ &= u(b_{1})\cdot s\cdot u(-b\cdot[1+b^2])\cdot h(b) \\ &= u(b_{1})\cdot s\cdot u(-\beta_{0}(a,b)\cdot \gamma_{0}(a,b))\cdot h(\beta_{0}(a,b)) \\ &= u(b_{1})\cdot s\cdot u({x}'_{1})\cdot h({y}'_{1}).
\end{align*}
Thus,
$${x}'_{0}=-\beta_{-1}(a,b)\cdot [\alpha_{-1}(a)+b\cdot \beta_{-1}(a,b)]$$ and
$${y}'_{0}=\beta_{-1}(a,b).$$
Now, assume by induction
$$x_{r}=-\alpha_{r-2}(a)\cdot \alpha_{r-1}(a),$$
$$y_{r}=\alpha_{r-1}(a),$$
$${x}'_{r}=-\beta_{r-1}(a,b)\cdot [\alpha_{r-1}(a)+b\cdot \beta_{r-1}(a,b)],$$
and
$${y}'_{r}=\beta_{r-1}(a,b)$$
for $r\leq k$, and we prove the correctness of the proposition for $r=k+1$.
By using our induction hypothesis, Proposition \ref{uhs-multiply}, and the identity
$$\alpha^{2}_{k-1}(a)=1+\alpha_{k}(a)\cdot \alpha_{k-2}(a)$$
from the last part of Proposition \ref{alpha-property}, the following holds:
\begin{align*}
[u(a)\cdot s]^{k+1} &= u(a)\cdot s\cdot [u(a)\cdot s]^{k} \\ &= u(a)\cdot s\cdot u(a_{k})\cdot s\cdot u(-\alpha_{k-2}(a)\cdot \alpha_{k-1}(a))\cdot h(\alpha_{k-1}(a)) \\ &= u(a-{a_{k}}^{-1})\cdot s\cdot u(-a_{k})\cdot h(a_{k})\cdot u(-\alpha_{k-2}(a)\cdot \alpha_{k-1}(a))\cdot h(\alpha_{k-1}(a)) \\ &= u(a_{k+1})\cdot s\cdot u\left(-\frac{\alpha_{k}(a)}{\alpha_{k-1}(a)}\right)\cdot h\left(\frac{\alpha_{k}(a)}{\alpha_{k-1}(a)}\right)\cdot \\ &\cdot u(-\alpha_{k-2}(a)\cdot \alpha_{k-1}(a))\cdot h(\alpha_{k-1}(a)) \\ &= u(a_{k+1})\cdot s\cdot u\left(-\frac{\alpha_{k}(a)+\alpha^{2}_{k}(a)\cdot \alpha_{k-2}(a)}{\alpha_{k-1}(a)}\right)\cdot h(\alpha_{k}(a)) \\ &= u(a_{k+1})\cdot s\cdot u\left(-\frac{\alpha_{k}(a)\cdot[1+\alpha_{k}(a)\cdot \alpha_{k-2}(a)]}{\alpha_{k-1}(a)}\right)\cdot h\left(\alpha_{k}(a)\right) \\ &= u(a_{k+1})\cdot s\cdot u(-\alpha_{k}(a)\cdot \alpha_{k-1}(a))\cdot h(\alpha_{k}(a)).
\end{align*}
Thus,
$$x_{k}=-\alpha_{k-2}(a)\cdot \alpha_{k-1}(a),$$
$$y_{k}=\alpha_{k-1}(a),$$
for every $1\leq k\leq t-1$. \\
Similarly,
\begin{align*}
[u(a)\cdot s]^{k+1}\cdot [u(b)\cdot s\cdot u(-b)] &= u(a)\cdot s\cdot [u(a)\cdot s]^{k}\cdot [u(b)\cdot s\cdot u(-b)] \\ &= u(a)\cdot s\cdot u(b_{k})\cdot s\cdot u(-\beta_{k-1}(a,b)\cdot \gamma_{k-1}(a,b))\cdot h(\beta_{k-1}(a,b)) \\ &= u(a-{b_{k}}^{-1})\cdot s\cdot u(-b_{k})\cdot h(b_{k})\cdot \\ &\cdot u(-\beta_{k-1}(a,b)\cdot \gamma_{k-1}(a,b))\cdot h(\beta_{k-1}(a,b)) \\ &= u(b_{k+1})\cdot s\cdot u\left(-\frac{\beta_{k}(a,b)}{\beta_{k-1}(a,b)}\right)\cdot h\left(\frac{\beta_{k}(a,b)}{\beta_{k-1}(a,b)}\right)\cdot  \\ &\cdot u(-\beta_{k-1}(a,b)\cdot \gamma_{k-1}(a,b))\cdot h(\beta_{k-1}(a,b)) \\ &= u(b_{k+1})\cdot s\cdot \\ &\cdot u\left(-\frac{\beta_{k}(a,b)\cdot [1+\beta_{k}(a,b)\cdot \gamma_{k-1}(a,b)]}{\beta_{k-1}(a,b)}\right)\cdot \\ &\cdot h\left(\beta_{k}(a,b)\right) \\ &= u(b_{k+1})\cdot s\cdot u(-\beta_{k}(a,b)\cdot\gamma_{k}(a,b))\cdot h(\beta_{k}(a,b)).
\end{align*}
Thus,
\begin{align*}
{x}'_{k} &= -\beta_{k-1}(a,b)\cdot [\alpha_{k-1}(a)+b\cdot \beta_{k-1}(a,b)] \\ &= -\beta_{k-1}(a,b)\cdot\gamma_{k-1}(a,b),
\end{align*}
and
$${y}'_{k}=\beta_{k-1}(a,b),$$
for every $0\leq k\leq t-1$.
\end{proof}

\begin{proposition}\label{x-y-ogs-bn-pair}
Let $G=PSL_{2}(q)$.  For every $1\leq k\leq t-1$, $0\leq \ell\leq t-1$, let $a_{k}, b_{\ell}$ be elements of $\mathbb{F}_{q}$ as it is defined in Proposition \ref{ab-unique}, and for $-1\leq r\leq t-1$ let  $\alpha_{r}(a)$,  $\beta_{r}(a,b)$, and $\gamma_{r}(a,b)$ be elements of $\mathbb{F}_{q}$ as it is defined in Definition \ref{alpha} and \ref{beta}. Then, the following holds:
\begin{itemize}
\item
\begin{align*}
&[u(a)\cdot s]^{k}\cdot u(x)\cdot h(y) \\ &=u(a_{k})\cdot s\cdot u(\alpha_{k-1}(a)\cdot[\alpha_{k-1}(a)\cdot x-\alpha_{k-2}(a)])\cdot h(\alpha_{k-1}(a)\cdot y)\\ &=u(a_{k})\cdot s\cdot u(\alpha_{k-1}^{2}(a)\cdot[x-a_{t-k}])\cdot h(\alpha_{k-1}(a)\cdot y)\\ &=u(a_{k})\cdot s\cdot u(\alpha_{k-1}^{2}(a)\cdot[x+a_{k}-a])\cdot h(\alpha_{k-1}(a)\cdot y);
\end{align*}
\item
\begin{align*}
&[u(a)\cdot s]^{\ell}\cdot [u(b)\cdot s\cdot u(-b)]\cdot u(x)\cdot h(y) \\ &=u(b_{\ell})\cdot s\cdot u(\beta_{\ell-1}(a,b)\cdot[\beta_{\ell-1}(a,b)\cdot x-\gamma_{\ell-1}(a,b)])\cdot h(\beta_{\ell-1}(a,b)\cdot y).
\end{align*}
\end{itemize}
\end{proposition}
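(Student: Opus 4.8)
The plan is to obtain Proposition \ref{x-y-ogs-bn-pair} as an immediate corollary of Proposition \ref{x-k-y-k} together with the elementary multiplication laws of Proposition \ref{uhs-multiply}. Proposition \ref{x-k-y-k} already records the $BN$-pair form of $[u(a)\cdot s]^{k}$ and of $[u(a)\cdot s]^{k}\cdot[u(b)\cdot s\cdot u(-b)]$; the present statement only asks what happens when an arbitrary element $u(x)\cdot h(y)$ of $B$ is appended on the right, so the whole argument is a short formal computation inside $PSL_{2}(q)$.

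For the first bullet I would start from
$$[u(a)\cdot s]^{k}\cdot u(x)\cdot h(y)=u(a_{k})\cdot s\cdot u(-\alpha_{k-2}(a)\cdot \alpha_{k-1}(a))\cdot h(\alpha_{k-1}(a))\cdot u(x)\cdot h(y),$$
using Proposition \ref{x-k-y-k}. Then I would push $h(\alpha_{k-1}(a))$ past $u(x)$ via the relation $h(y_{0})\cdot u(x)=u(x\cdot y_{0}^{2})\cdot h(y_{0})$ of Proposition \ref{uhs-multiply}, merge the two resulting consecutive $u$-factors with $u(x_{1})\cdot u(x_{2})=u(x_{1}+x_{2})$ and the two consecutive $h$-factors with $h(y_{1})\cdot h(y_{2})=h(y_{1}y_{2})$. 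This collapses the product to
$$u(a_{k})\cdot s\cdot u(\alpha_{k-1}(a)\cdot[\alpha_{k-1}(a)\cdot x-\alpha_{k-2}(a)])\cdot h(\alpha_{k-1}(a)\cdot y),$$
which is the first of the three claimed shapes. For the other two shapes I would factor $\alpha_{k-1}^{2}(a)$ out of the $u$-argument, leaving $x-\alpha_{k-2}(a)\cdot\alpha_{k-1}^{-1}(a)$, and then translate the $\alpha$'s back into the $a_{k}$'s: by Proposition \ref{ab-algorithm} one has $a_{k-1}=\alpha_{k-1}(a)\cdot\alpha_{k-2}^{-1}(a)$, hence $\alpha_{k-2}(a)\cdot\alpha_{k-1}^{-1}(a)=a_{k-1}^{-1}$; by the first part of Proposition \ref{prod-sum-a} (with the index shifted) $a_{k-1}^{-1}=a_{t-k}$, and by its second part $a_{t-k}=a-a_{k}$. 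Substituting yields the forms with $x-a_{t-k}$ and with $x+a_{k}-a$.

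The second bullet is handled identically: starting from the expression for $[u(a)\cdot s]^{k}\cdot[u(b)\cdot s\cdot u(-b)]$ in Proposition \ref{x-k-y-k}, one multiplies on the right by $u(x)\cdot h(y)$, commutes $h(\beta_{k-1}(a,b))$ past $u(x)$ by the same relation (this introduces the scalar factor $\beta_{k-1}^{2}(a,b)$), and then combines the consecutive $u$'s and the consecutive $h$'s, arriving at
$$u(b_{k})\cdot s\cdot u(\beta_{k-1}(a,b)\cdot[\beta_{k-1}(a,b)\cdot x-\gamma_{k-1}(a,b)])\cdot h(\beta_{k-1}(a,b)\cdot y).$$
I expect no genuine obstacle here; the only care required is the bookkeeping of the squared scalar $\alpha_{k-1}^{2}(a)$ (respectively $\beta_{k-1}^{2}(a,b)$) produced when the torus element is moved across the unipotent element, together with the passage between the $\alpha_{k}$'s and the $a_{k}$'s needed to present the alternative shapes of the first identity.
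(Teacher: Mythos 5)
Your proposal is correct and is exactly the argument the paper intends: the paper's own proof is the single sentence that the result follows straightforwardly from Propositions \ref{x-k-y-k}, \ref{uhs-multiply}, and \ref{prod-sum-a}, and your computation (append $u(x)\cdot h(y)$, commute the torus element past the unipotent picking up the square, merge factors, then rewrite $\alpha_{k-2}(a)\cdot\alpha_{k-1}^{-1}(a)=a_{k-1}^{-1}=a_{t-k}=a-a_{k}$) is precisely the omitted verification.
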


\begin{proof}
The proof is straightforward using of Theorems \ref{x-k-y-k}, \ref{ab-dickson}, and Proposition \ref{uhs-multiply}.
\end{proof}

\begin{corollary}\label{x-0}
Let $g\in PSL_{2}(q)$, such that the matrix presentation of $\tilde{g}$ is as follows: $\tilde{g}=\begin{pmatrix} r_{1} & -r_{2}^{-1} \\
 r_{2} & 0\end{pmatrix}$ for some $r_{1}\in \mathbb{F}_{q}$ and $r_{2}\in \mathbb{F}_{q}^{*}$. Let $a_{k}\in \mathbb{F}_{q}$ as it is defined in Proposition \ref{ab-unique}, $\alpha_{k}(a)$,  $\beta_{k}(a,b)$, and $\gamma_{k}(a,b)$ as it is defined in Definition \ref{alpha} and \ref{beta}. Then,  $g=u(\tilde{a})\cdot s\cdot h(\tilde{y})$, such that $\tilde{y}=r_{2}$, $\tilde{a}=\frac{r_{1}}{r_{2}}$, and one of the following holds:
\begin{itemize}
\item If $\tilde{a}=a_{k}$ for $1\leq k<\frac{q+1}{gcd(2, q+1)}$, then $g=[u(a)\cdot s]^{k}\cdot u(x)\cdot h(y)$, such that  $x=a_{t-k}=a-a_{k}$ ($t=\frac{q+1}{gcd(2, q+1)}$);
\item If $\tilde{a}=b_{k}$ for $0\leq k<\frac{q+1}{gcd(2, q+1)}$ and $q$ is odd, then \\ \\ $g=[u(a)\cdot s]^{k}\cdot [u(b)\cdot s\cdot u(-b)]\cdot u(x)\cdot h(y)$, such that \\ \\$x=\frac{\gamma_{k-1}(a,b)}{\beta_{k-1}(a,b)}=b+\frac{\alpha_{k-1}(a)}{\beta_{k-1}(a,b)}$.
\end{itemize}
\end{corollary}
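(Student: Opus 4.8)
The plan is to read off the BN-pair form of $g$ directly from its matrix, then use the list of coset representatives furnished by Theorem \ref{main} together with Proposition \ref{ab-unique} to identify the correct power of $u(a)\cdot s$, and finally extract $x$ from the multiplication law of Proposition \ref{x-y-ogs-bn-pair}.

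\textbf{Step 1: the BN-pair presentation of $g$.} Using the representative matrices of Definition \ref{bn-presentation-matrix}, one computes
$$\hat u(\tilde a)\cdot\hat s\cdot\hat h(\tilde y)=\begin{pmatrix}-\tilde a & 1\\ -1 & 0\end{pmatrix}\begin{pmatrix}\tilde y & 0\\ 0 & \tilde y^{-1}\end{pmatrix}=\begin{pmatrix}-\tilde a\tilde y & \tilde y^{-1}\\ -\tilde y & 0\end{pmatrix}.$$
Comparing with $\tilde g$ inside $PSL_{2}(q)$, i.e. up to sign and up to the identification $h(y)=h(-y)$, forces $\tilde y=r_{2}$ and $\tilde a=r_{1}r_{2}^{-1}$; and since $r_{2}\neq 0$ the matrix $\tilde g$ is not upper triangular, so $g\notin B$ and $g=u(\tilde a)\cdot s\cdot u(0)\cdot h(\tilde y)$ is the BN-pair presentation of Proposition \ref{bn-canon-psl}, with first parameter $\tilde a$ and middle parameter $0$.

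\textbf{Step 2: locating the $B$-coset.} By Proposition \ref{ab-unique} the field element $\tilde a$ occurs exactly once among $a_{1},\dots,a_{t-1}$ (when $\mathrm{char}\,\mathbb{F}_{q}=2$) or among $a_{1},\dots,a_{t-1},b_{0},\dots,b_{t-1}$ (when $q$ is odd), where $t=\frac{q+1}{\gcd(2,q+1)}$. Say $\tilde a=a_{k}$ (resp. $\tilde a=b_{k}$). Since $[u(a)\cdot s]^{k}$ (resp. $[u(a)\cdot s]^{k}\cdot[u(b)\cdot s\cdot u(-b)]$) has the same first BN-pair parameter by Proposition \ref{ab-unique}, Conclusion \ref{coset-bn-unique} gives $gB=[u(a)\cdot s]^{k}B$ (resp. $gB=[u(a)\cdot s]^{k}\cdot[u(b)\cdot s\cdot u(-b)]B$), so $g=[u(a)\cdot s]^{k}\cdot u(x)\cdot h(y)$ (resp. $g=[u(a)\cdot s]^{k}\cdot[u(b)\cdot s\cdot u(-b)]\cdot u(x)\cdot h(y)$) for some $x\in\mathbb{F}_{q}$, $y\in\mathbb{F}_{q}^{*}$; this also confirms that $k$ lies in the range where Proposition \ref{x-y-ogs-bn-pair} applies.

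\textbf{Step 3: solving for $x$.} Rewrite $[u(a)\cdot s]^{k}\cdot u(x)\cdot h(y)$ (resp. the expression with the extra involution) in BN-pair form via Proposition \ref{x-y-ogs-bn-pair}; its middle parameter is $\alpha_{k-1}^{2}(a)\,(x+a_{k}-a)$ (resp. $\beta_{k-1}(a,b)\,(\beta_{k-1}(a,b)\,x-\gamma_{k-1}(a,b))$). By Step 1 the middle parameter of $g$ equals $0$, so this scalar vanishes. Now $\alpha_{k-1}(a)=\prod_{i=1}^{k-1}a_{i}$ and $\beta_{k-1}(a,b)=\prod_{i=0}^{k-1}b_{i}$ by Proposition \ref{ab-algorithm}, with the empty products equal to $1$, and none of $a_{1},\dots,a_{t-2},b_{0},\dots,b_{t-1}$ is zero, since by Propositions \ref{t-1} and \ref{ab-unique} these are precisely the $q-1$ nonzero elements of $\mathbb{F}_{q}$ once $a_{t-1}=0$ is discarded. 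Hence $\alpha_{k-1}(a)\neq 0$ and $\beta_{k-1}(a,b)\neq 0$, and cancelling yields $x=a-a_{k}=a_{t-k}$ (the last equality by Proposition \ref{prod-sum-a}), resp. $x=\gamma_{k-1}(a,b)\,\beta_{k-1}^{-1}(a,b)=b+\alpha_{k-1}(a)\,\beta_{k-1}^{-1}(a,b)$ (using $\gamma_{k-1}(a,b)=\alpha_{k-1}(a)+b\,\beta_{k-1}(a,b)$ from Definition \ref{beta}); comparing $h$-parameters also gives $y=\tilde y\,\alpha_{k-1}^{-1}(a)$, resp. $y=\tilde y\,\beta_{k-1}^{-1}(a,b)$.

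All the computations are short; the only points that need care are the sign / $PSL$-versus-$SL$ bookkeeping in Step 1, the degenerate case $k=0$ of the odd-characteristic branch (where $\beta_{-1}(a,b)=1$, $\gamma_{-1}(a,b)=b$, recovering $x=b$ directly from $u(b)\cdot s\cdot u(-b)\cdot u(b)\cdot h(y)=u(b)\cdot s\cdot h(y)$), and verifying the non-vanishing of $\alpha_{k-1}(a)$ and $\beta_{k-1}(a,b)$ so that the division in Step 3 is legitimate. None of these is a genuine obstacle — the substantive work has already been packaged into Propositions \ref{ab-unique}, \ref{ab-algorithm}, \ref{prod-sum-a}, and \ref{x-y-ogs-bn-pair}.
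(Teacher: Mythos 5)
Your proof is correct and follows essentially the route the paper intends: the paper's proof is the single line ``direct consequence of Proposition \ref{x-y-ogs-bn-pair},'' and your three steps (reading off the BN-pair parameters of $g$ with middle parameter $0$, locating the coset via Proposition \ref{ab-unique} and Conclusion \ref{coset-bn-unique}, and setting the middle parameter from Proposition \ref{x-y-ogs-bn-pair} equal to $0$) are exactly the details that line suppresses. Your added care about the sign bookkeeping in $PSL_{2}(q)$, the non-vanishing of $\alpha_{k-1}(a)$ and $\beta_{k-1}(a,b)$, and the $k=0$ case is a welcome expansion rather than a deviation.
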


\begin{proof}
The proof is a direct consequence of Proposition \ref{x-y-ogs-bn-pair}.
\end{proof}

\section{Conclusions and future plans}

The results of the paper provide intriguing connections between the $BN-pair$ presentation and the $OGS$ presentation of $PSL_{2}(q)$, which is a family of finite simple groups, where $q\geq 4$. These connections allow to studying very interesting properties of some important recursive sequences over a finite field $\mathbb{F}_{q}$, which are closely connected to the Dickson polynomials and the Chebyshev polynomials of the second kind (in special cases to the Dickson and the Chebyshev polynomials of the first kind as well). The results motivate us to further research in open questions about the connections between the mentioned presentations ($BN-pair$ and $OGS$) of $PSL_{n}(q)$ for $n>2$ and other families of simple groups of Lie-type.

\end{document}